\documentclass{article}
\linespread{1.2}
\usepackage{amssymb}
\usepackage[margin=2.5cm]{geometry}
\usepackage{amsmath}
\usepackage{amsthm}
\usepackage{mathrsfs}
\usepackage{verbatim}
\usepackage{enumerate}
\usepackage{subcaption}
\usepackage{tikz}
\usepackage{listings}
\usepackage{float}
\usepackage{marvosym}
\usepackage{fancyhdr}
\pagestyle{fancy}

\lhead{}
\chead{}
\rhead{}

\newtheorem{defn}{Definition}[section]
\newtheorem{lemma}[defn]{Lemma}

\newtheorem{cor}[defn]{Corollary}

\newtheorem{prop}[defn]{Proposition}

\newtheorem{theorem}[defn]{Theorem}
\theoremstyle{definition}

\newtheorem{question}[defn]{Question}

\newcommand{\E}{\mathbf{E}}
\newcommand{\Prob}{{\mathbf{P}}}
\newcommand{\rtt}{\mathbf{0}}
\newcommand{\TFM}{{\sf TFM}}

\newcommand{\Poiss}{\mathrm{Poiss}}

\newcommand{\Par}{\overleftarrow}

\begin{document}
	
	\title{The frog model on non-amenable trees}%
	\author{Marcus Michelen\thanks{ Dept. of Mathematics, Statistics and Computer Science, University of Illinois at Chicago, 851 S. Morgan Street, Chicago, IL 60607.} \\
		\texttt{michelen.math@gmail.com}
		\and 
		Josh Rosenberg\thanks{ Dept. of Mathematics,
			University of Washington, C-138 Padelford Hall, Seattle, WA 98195. This author was supported by a Zuckerman STEM Postdoctoral Fellowship, as well as by ISF grant 1207/15, and ERC
			starting grant 676970 RANDGEOM}\\
		\texttt{jr288@uw.edu}
	}
	\date{}

	\maketitle
	
	\begin{abstract}
		We examine an interacting particle system on trees commonly referred to as the frog model.  For its initial state, it begins with a single active particle at the root and i.i.d.~$\mathrm{Poiss}(\lambda)$ many inactive particles at each non-root vertex.  Active particles perform discrete time simple random walk and in the process activate any inactive particles they encounter.  We show that for \emph{every} non-amenable tree with bounded degree there exists a phase transition from transience to recurrence (with a non-trivial intermediate phase sometimes sandwiched in between) as $\lambda$ varies.  
	\end{abstract}
	
	\section{Introduction}
	
	The frog model is a particular system of interacting random walks on a rooted graph.  It starts with a single active particle at the root, and some collection of inactive particles distributed 
	among the non-root vertices.  Active particles perform mutually independent discrete-time simple random walk, and any time an active particle meets a group of inactive particles, the inactive particles become active.  In this system the particles are often referred to as ``frogs,'' where active particles are considered ``awake'' and inactive particles ``sleeping.''  For infinite graphs, studies of the frog model often involve establishing whether it is recurrent (meaning almost surely infinitely many active particles hit the root) or transient (meaning almost surely only finitely many active particles ever hit the root).  Much work has been done on the frog model, including work on $\mathbb{Z}^d$ \cite{alves,DP,gantert-schmidt,GNR,popov-random,TW}; one representative result \cite{alves} shows that the model is recurrent when there are i.i.d.~frogs per vertex.  
	
	Results on trees paint a different picture: Hoffman, Johnson, and Junge in two works \cite{HJJ1,HJJ2} studied two different frog models on regular trees---one considers one frog per vertex while the other considers i.i.d.~$\Poiss(\lambda)$ frogs per vertex---and concluded that ``that the frog model on trees is teetering on the edge between recurrence and transience.''  In particular, they showed that if $\Poiss(\lambda)$ frogs are placed on each vertex of a regular tree, then there is a sharp transition from transience to recurrence as $\lambda$ varies \cite{HJJ1}.  For a more detailed background on the frog model, see \cite{mr} and the references therein. 
	
	A running theme in the previous frog model work is that the underlying graph is transitive or quasi-transitive, meaning that the set of vertices can be partitioned into finitely many sets so that for each pair of vertices in the same set there exists a graph automorphism mapping one to the other.  In a recent work \cite{mr}, the authors studied the frog model with $\Poiss(\lambda)$ frogs per vertex on Galton-Watson trees; under mild assumptions on the offspring distribution, we proved that there is a sharp transition Galton-Watson almost-surely from transience to recurrence as $\lambda$ varies.  The goal of this work is to continue to break free from the restriction of quasi-transitivity and prove the existence of a phase transition for a wide class of deterministic trees.
	
	\subsection{Results}
	
	In the present work we examine the frog model with i.i.d. $\text{Poiss}(\lambda)$ sleeping frogs positioned at each non-root vertex for all non-amenable trees of bounded degree.  In particular, no self-similarity is imposed.  Our main result involves showing that for each tree $T$ in this class, there are critical thresholds $0<\lambda_1(T)\leq\lambda_2(T)<\infty$ such that the model is transient for $\lambda<\lambda_1(T)$ and recurrent for $\lambda>\lambda_2(T)$.   Recall that an infinite graph $G = (V,E)$ of bounded degree is \emph{non-amenable} if for all finite subsets $S \subset V$ the ratio $|\partial S| / |S|$ is uniformly bounded away from $0$; here, $|\partial S|$ is the number of edges with one vertex in $S$ and one in $V \setminus S$ and $|S|$ is the number of vertices in $S$.
	
	\medskip
	\begin{theorem}\label{theorem:mrecth}
		For any non-amenable tree $T$ of bounded degree, there exist $0 < \lambda_1 \leq \lambda_2 < \infty$, such that the frog model on $T$ with i.i.d.\ $\Poiss(\lambda)$ frogs per non-root vertex is transient for $\lambda < \lambda_1$ and recurrent for $\lambda > \lambda_2$.
	\end{theorem}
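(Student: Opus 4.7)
My plan is to prove transience for $\lambda$ small and recurrence for $\lambda$ large by two essentially separate arguments, both driven by the fact that non-amenability together with bounded degree forces the spectral radius $\rho$ of simple random walk on $T$ to satisfy $\rho < 1$.

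For transience, I would use a first-moment argument mediated by an operator bootstrap on the activation probabilities. Write $G$ for the Green's function of SRW on $T$ and set $A(v) = \Prob(v\text{ is ever activated})$. Decomposing the event $\{v\text{ activated}\}$ according to the source vertex $u$ whose frog first reaches $v$, and applying Markov's inequality to the $\Poiss(\lambda)$ frog counts at each $u$ (using that the occupancy $N_u$ at $u$ is independent of whether $u$ is activated), I get the bootstrap
\[
A(v) \;\le\; G(\rtt, v) + \lambda \sum_{u \ne \rtt} A(u)\, G(u, v).
\]
Since $\rho<1$, the operator $G$ is bounded on $\ell^2$ with norm $\lesssim (1-\rho)^{-1}$, so iterating the bound generation-by-generation (using $A \leq 1$ pointwise to justify truncation) places $A$ itself in $\ell^2$ once $\lambda$ is small enough for $\lambda G^T$ to be a strict contraction. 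A parallel linearity-of-expectation computation yields $\E[V] = G(\rtt,\rtt) + \lambda \sum_v A(v) G(v, \rtt)$, where $V$ counts all visits to the root; this is finite by Cauchy--Schwarz, and transience follows by Borel--Cantelli.

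For recurrence, the plan is to dominate the activation cascade from below by a supercritical branching process. Using non-amenability, I would extract an infinite subtree $T' \subset T$ whose branching rate strictly exceeds $1$ (via, e.g., a Lyons-style max-flow/min-cut argument supplied by the isoperimetric hypothesis), and then argue that for $\lambda$ large enough the set of vertices in $T'$ that get activated, together with a frog that subsequently returns to the root, stochastically dominates a supercritical Galton--Watson process along $T'$. Survival of this embedded GW process produces infinitely many ``successful'' activations with positive probability, each contributing at least one root visit. A standard $0$--$1$ law applied to the tail $\sigma$-algebra of the Poisson frog configuration then upgrades positive probability to almost-sure.

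The principal obstacle is the recurrence half. Unlike the regular-tree case \cite{HJJ1, HJJ2} or the Galton--Watson case \cite{mr}, no self-similarity of $T$ is available to set up a clean recursion; the skeleton $T'$ must be constructed from the isoperimetric hypothesis alone, and the branching lower bound must be controlled uniformly in the (irregular) geometry of $T$. I expect the construction to rely on a flow-existence argument for building $T'$, combined with monotone couplings that reduce the activation cascade on $T'$ to a branching random walk whose mean offspring is tuned by $\lambda$; matching the branching rate of $T'$ with the per-vertex success probability is the delicate step.
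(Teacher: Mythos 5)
Your transience half is essentially sound and constitutes a legitimate alternative to the paper's route: the paper instead dominates the frog model by a branching random walk and invokes a transience criterion for branching Markov chains with mean offspring at most $1/\rho$, whereas you run a first-moment computation through the Green's function; both arguments draw on the same fact, $\rho<1$, supplied by non-amenability and bounded degree. One technical wrinkle in your version: you cannot close the iteration starting from $A\le 1$, because $\sum_u G(u,v)=\sum_n\sum_u p_n(u,v)$ diverges (by reversibility and bounded degree each inner sum is of order $1$), so $\lambda^n (G^T)^n$ applied to the constant function is infinite and the proposed truncation does not terminate the bootstrap. The fix is to define $A_k(v)$ as the probability that $v$ is activated through a chain of at most $k$ intermediate activated vertices; then $A_0\le G(\rtt,\cdot)\in\ell^2$ and $A_{k+1}\le G(\rtt,\cdot)+\lambda G^T A_k$, so every iterate, and hence the monotone limit $A$, stays in $\ell^2$ once $\lambda\|G\|_{2\to 2}<1$.

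The recurrence half has a genuine gap, and it sits exactly where the paper has to work hardest. Your plan --- extract a subtree of branching number greater than $1$, dominate the activated set from below by a supercritical branching process whose per-vertex success probability $p=p(\lambda)$ is bounded away from $1$, and conclude because ``each successful activation contributes at least one root visit'' --- cannot work, because an activated frog at distance $n$ from the root returns with probability decaying exponentially in $n$ at precisely the rate that cancels the growth of the tree. Concretely, on the $d$-ary tree a vertex at level $n$ is activated with probability at most $p^n$ under such a domination, there are $d^n$ such vertices, and each released frog hits the root with probability of order $d^{-n}$; the expected number of returns contributed by level $n$ is then of order $\lambda p^n$, which is summable for every fixed $p<1$. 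A fixed supercritical lower bound therefore yields only finitely many expected returns, and no zero--one law can upgrade that to recurrence. This is why the paper passes to the truncated model with loop-erased trajectories (to manufacture independence), proves the bootstrap of Proposition \ref{pr:iterate}, which improves the conditional activation probability from $1-e^{-i\lambda/2}$ to $1-e^{-(i+1)\lambda/2}$ and hence drives the activation parameter all the way to $1$, and then counts returns via the comparison between hitting probabilities and harmonic measure (Lemma \ref{lem:crphmjg}) to obtain order-$n$ returning particles at each activated vertex. Your proposal correctly flags this as the delicate step, but it does not supply the mechanism that makes it work.
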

	
\medskip
Since regular trees are non-amenable and have bounded degree, this result generalizes the results of \cite{HJJ1} to a much wider class.  Unlike the case of regular trees, however, there indeed exist examples of non-amenable trees of bounded degree for which there is a non-trivial intermediate regime, i.e.\ the parameters $\lambda_1$ and $\lambda_2$ from Theorem \ref{theorem:mrecth} satisfy the strict inequality $\lambda_1 < \lambda_2$; an example is provided in Lemma~\ref{lem:nontrivial-intermediate}.  Further, the assumptions of non-amenability and bounded degree cannot be removed: on the amenable tree $\mathbb{Z}$, random walks are recurrent and thus the Poisson frog model has no transient regime; conversely, Lemma \ref{lem:rtnrr} provides an example of a non-amenable tree of unbounded degree without a recurrent regime.
	
As is the case in \cite{mr}, in order to establish recurrence in the proof of Theorem \ref{theorem:mrecth}, we define a separate, more tractable, model on $T$ that we refer to as the truncated frog model.  The main difference between the ordinary frog model and the truncated frog model is that in the latter, trajectories are replaced with their loop-erased versions.  The number of returns to the root in the ordinary frog model will stochastically dominate the number of visits in the truncated model, and so proving recurrence of the truncated frog model will be sufficient to deduce recurrence of the ordinary frog model.  Our proof of recurrence will rely on a bootstrapping argument: we show that if a certain quantity is large, then it can be shown to be even larger.  Our reduction to a loop-erased version as well as our bootstrapping argument certainly incorporate some key ideas from the arguments \cite{HJJ1} and \cite{HJJ2} used for regular trees.  However, the self-similarity in their case allowed them to operate on an extremely local level, while the absence of self-similarity in our case forced us to construct a bootstrapping argument that operates on a larger scale, and indeed each iteration looks at larger and larger neighborhoods.

For the proof of transience, we couple the Poisson frog model on $T$ with branching random walk.  We then use the fact that for a non-amenable graph of bounded degree, the spectral radius is bounded away from $1$.  Then, using a more general result concerning transience of branching Markov chains, we show that for sufficiently small Poisson mean $\lambda$, the branching random walk model is transient on $T$, which by virtue of stochastic dominance, implies that the original frog model is as well.  In the process of obtaining this result, we also derive lower bounds on $\lambda_1(T)$.

	\subsection{Sketch of Recurrence Proof}
	
The proof of recurrence for Theorem \ref{theorem:mrecth} relies on a delicate bootstrapping argument; here, we enumerate some of the key ideas, with the objective of offering a useful road-map through the proof.
	
	\begin{enumerate}
		\item We primarily focus on an altered version of the frog model which we detail in Section \ref{sec:truncated}; its main distinguishing feature is that frogs now perform loop-erased walk instead of simple random walk.  
		
		\item Since the trees we are interested in have bounded degree and no long pipes, the transition probabilities for loop-erased walk are uniformly bounded away from $0$ and $1$, as shown in \eqref{lbpfsn} and \eqref{bfgtransprle}.
		
		\item The main stroke of the proof is to prove Proposition \ref{pr:iterate}: this says that if \begin{equation}\label{eq:intro-recurrence}
		\Prob(u \text{ activated}\,|\,\Par{u}\text{ activated}) \geq 1 - e^{-\lambda i/2}
		\end{equation}
		for all $u$---where $\Par{u}$ denotes the parent of $u$---then in fact the same inequality holds with $i$ replaced by $i + 1$.  The base case follows from step $2$ of the sketch.
		
		\item Choosing $n$ as a function of $i$ and applying \eqref{eq:intro-recurrence} iteratively shows that a constant fraction of vertices at depth $n$ of the descendant tree of any activated vertex are themselves activated with high probability (Claim 2 in the proof of Proposition \ref{pr:iterate}).
		
		\item By using the fact that hitting probabilities are nearly symmetric on the trees of interest (see Appendix \ref{app:hitting}), this implies that if a vertex is activated, on the order of $n$ particles return to it with high probability (Claim 3 in the proof of Proposition \ref{pr:iterate}).
		
		\item If $\Par{u}$ is activated and $u$ is not, some sibling $u'$ of $u$ is.  By the previous step, on the order of $n$ particles return to $u'$ with high probability.  By step $2$, the probability that any given one of these hits $u$ is bounded below, and thus one hits $u$ with high probability, completing the inductive step.
	\end{enumerate}

		\subsection{Description of Non-Amenable Trees}
		
		The class of non-amenable trees of bounded degree may be described without isoperimetric language: 

	\begin{lemma} \label{lem:non-amenable}
		For a given rooted tree $T$, define $T_r$ to be the set of vertices in $T$ through which there exists a non-backtracking path from the root.  If $T$ is of bounded degree, then $T$ is non-amenable if and only if $T$ satisfies both of the following conditions \begin{itemize}
			\item There exists an $M$ so that for each $v \in T$, the connected components of $(T \setminus T_r) \cup \{v\}$ each have at most $M$ vertices;
			\item There exists an $M$ so that there does not exist a path $v_0, v_1, \ldots, v_M$ in $T_r$ with each $v_j$ having degree $2$ in $T_r$.
		\end{itemize} 
	\end{lemma}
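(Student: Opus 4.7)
The plan is to handle each direction separately: the forward direction by exhibiting finite sets with vanishing edge-boundary ratio when either condition fails, and the reverse direction by first showing the skeleton $T_r$ is non-amenable and then transferring the bound to $T$ via a cell decomposition. For the forward direction ($\Rightarrow$), assume $T$ is non-amenable with Cheeger constant $c$ and degree bound $\Delta$. If condition (1) fails, choose a component $C$ of $(T \setminus T_r) \cup \{v\}$ with $|C|$ arbitrarily large and set $S = C$: since all finite branches at a vertex in $T_r$ lie in the same component, $|\partial_T S|$ is bounded by the at most $\Delta$ edges of $T_r$ incident to $v$ (and just by $1$ if $v \notin T_r$), so $|\partial_T S|/|S| \to 0$, a contradiction. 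With the resulting constant $M_1$ for (1) in hand, if (2) then fails one can find arbitrarily long paths $v_0, \ldots, v_L$ of degree-$2$ (in $T_r$) vertices; taking $S = \bigcup_{j=0}^L C(v_j)$ where $C(v) := \{v\} \cup (\text{finite branches at }v)$, the inclusion of all branches yields $|\partial_T S| = 2$ (only the two $T_r$-edges from $v_0$ and $v_L$ to their other $T_r$-neighbors), while $|S| \leq M_1(L+1) \to \infty$, again contradicting non-amenability.

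For the reverse direction ($\Leftarrow$), assume (1) and (2) hold with common constant $M$. The first step is to show $T_r$ is itself non-amenable. Every non-root vertex of $T_r$ has its parent and at least one child in $T_r$ (since it lies on an infinite ray), so $\deg_{T_r} \geq 2$; by (2), maximal chains of degree-$2$ vertices in $T_r$ have at most $M$ vertices. For a finite connected $S' \subseteq T_r$ with $n_3$ branching vertices (degree $\geq 3$ in $T_r$), an Euler-type count gives
\[
|\partial_{T_r} S'| = \sum_{v \in S'} \deg_{T_r}(v) - 2|E(S')| \geq 2(|S'|-n_3) + 3n_3 - 2(|S'|-1) = n_3 + 2.
\]
To bound $|S'|$ in terms of $n_3$: if $n_3 = 0$ then $S'$ lies in a single chain so $|S'| \leq M$; if $n_3 \geq 1$ then connectedness of $S'$ forces every chain containing a degree-$2$ vertex of $S'$ to have an endpoint in $S'$ (the $T_r$-geodesic from such a degree-$2$ vertex to any branching vertex of $S'$ stays in $S'$ and exits the chain at a branching endpoint), giving $|S'| \leq (1 + \Delta M) n_3$. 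In either case the ratio is bounded below by some $c_1 > 0$ depending only on $M$ and $\Delta$, and disconnected $S'$ reduce to their components.

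The second step transfers the bound to $T$. Partition $T = \bigsqcup_{v \in T_r} C(v)$; by (1), $|C(v)| \leq M$. For finite $S \subseteq T$, let $V_f = \{v : S \cap C(v) = C(v)\}$ and $V_p = \{v : \emptyset \neq S \cap C(v) \subsetneq C(v)\}$. Each $v \in V_p$ contributes at least one boundary edge strictly inside $C(v)$ (these are distinct edges for different $v$), while the $T_r$-internal part of $\partial_T S$ equals $\partial_{T_r}(S \cap T_r)$ which has size at least $c_1|S \cap T_r| \geq c_1|V_f|$ by the first step. Using $|S| \leq M(|V_f| + |V_p|)$, these combine to
\[
|\partial_T S| \geq |V_p| + c_1 |V_f| \geq \min(1, c_1)(|V_f| + |V_p|) \geq \tfrac{\min(1, c_1)}{M} |S|,
\]
establishing non-amenability of $T$.

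The main obstacle I anticipate is the geometric claim in the first step: that in a connected $S' \subseteq T_r$ containing at least one branching vertex, every degree-$2$ vertex of $S'$ lies in a chain whose endpoint belongs to $S'$. This relies on the tree property that geodesics between any two vertices of a connected subset stay in the subset, combined with (2) ensuring that chains terminate at branching vertices within at most $M$ steps, so that a degree-$2$ vertex cannot ``hide'' arbitrarily far from every branching vertex of $S'$.
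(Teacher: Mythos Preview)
Your argument is correct; there is one small slip in the forward direction: when condition~(2) fails you write $|S|\le M_1(L+1)\to\infty$, but what you need (and clearly intend) is the lower bound $|S|\ge L+1\to\infty$, which together with $|\partial_T S|=2$ gives $|\partial S|/|S|\to 0$. The geometric claim you flagged as the main obstacle is fine exactly for the reason you give: in a tree the geodesic between any two vertices of a connected subset stays in the subset, so the path from a degree-$2$ vertex of $S'$ to any branching vertex of $S'$ must exit the chain at a branching endpoint, which therefore lies in $S'$.

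Your sufficiency argument takes a genuinely different route from the paper's. The paper (Lemma~\ref{lem:tree-iso}) works directly on $T$ and inducts on the number $k$ of vertices of degree $\ge 3$ in $T_r$ contained in the test set $K$: the cases $k\in\{0,1\}$ are handled by explicit degree counts, and the inductive step peels off one branching vertex $v_{k+1}$, comparing the gain in $|\partial K|$ (at least $\deg_{T_r}(v_{k+1})-2$) to the gain in $|K|_D$ (at most $3L^2\deg_{T_r}(v_{k+1})$). You instead separate the problem into two independent pieces: first an Euler count $|\partial_{T_r}S'|\ge n_3+2$ combined with the chain bound $|S'|\le(1+\Delta M)n_3$ gives non-amenability of the skeleton $T_r$, and then the cell decomposition $T=\bigsqcup_{v\in T_r}C(v)$ transfers the bound to $T$ by splitting cells into fully and partially covered. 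Your approach is more modular and makes the role of $T_r$ transparent; the paper's induction is more self-contained and directly yields the quantitative bound $\Phi_E(T)\ge 1/(9L^2)$ in the degree-weighted normalization used later for transience. Both ultimately rest on the same combinatorial fact: branching vertices force boundary edges, while chains and bushes are uniformly short.
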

	\begin{proof}
		
		Necessity of the two conditions is easier to show: if connected components of $(T\setminus T_r) \cup \{v\}$ can be arbitrarily large, then there exists a sequence $S_n$ of subtrees with $|S_n \cap T_r| = 1$ and $|S_n| \to \infty$.  Such a set must have $|\partial S_n | $ bounded above by the maximum degree of $T$, thereby yielding $|\partial S_n| / |S_n| \to 0$.   Similarly, if paths of vertices of degree $2$ in $T_r$ can be arbitrarily large, let $\gamma_n$ be a collection of such paths so that $|\gamma_n| \to \infty$.  Define $S_n$ to be $\gamma_n$ together with vertices in $T \setminus T_r$ reachable from $\gamma_n$.  Then only the first and last vertex of $\gamma_n$ will be connected to elements of $T \setminus S_n$, thus showing again that $|\partial S_n | / |S_n| \to 0$. 

A quantitative proof of sufficiency is carried out in Lemma \ref{lem:tree-iso}.
\end{proof}

	\section{Recurrence}
	\label{sec:rec-non-amenable}
	In this section, we will establish our recurrence result, which consists of the following theorem.
	
	\medskip
	\begin{theorem}\label{theorem:mrr}
		Let $\mathcal{T}^*_f$ represent the set of all rooted trees of bounded degree that satisfy the second condition of Lemma \ref{lem:non-amenable}.  Then for any tree $T\in\mathcal{T}^*_f$, the frog model on $T$ with i.i.d. $\emph{Poiss}(\lambda)$ frogs per non-root vertex is recurrent for all $\lambda$ sufficiently large.
	\end{theorem}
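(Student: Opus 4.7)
The plan is to work throughout with the truncated frog model introduced in Section~\ref{sec:truncated}, in which active frogs traverse the loop-erasure of their simple random walk trajectories. Since the number of visits to the root in the original frog model stochastically dominates the number in the truncated version, it suffices to prove recurrence of the truncated model for $\lambda$ sufficiently large. The key structural input from membership in $\mathcal{T}^*_f$ is that $T_r$ contains no long degree-$2$ pipes, so from any vertex the loop-erased walk reaches a branching vertex within a uniformly bounded number of steps. Combined with the bounded-degree hypothesis, this yields a constant $c>0$, independent of the vertex, such that every one-step loop-erased transition probability lies in $[c,1-c]$.

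The centerpiece will be a bootstrapping statement (Proposition~\ref{pr:iterate}): if
\[
\Prob(u\text{ activated}\mid\Par{u}\text{ activated}) \geq 1 - e^{-\lambda i/2}
\]
holds uniformly in $u$ for some integer $i \geq 1$, then the same inequality holds with $i$ replaced by $i+1$. The base case $i=1$ is immediate from the loop-erased transition lower bound: a single frog at $\Par{u}$ steps to $u$ with probability at least $c$, so the $\Poiss(\lambda)$ frogs at $\Par{u}$ generate such a step with probability at least $1-\exp(-c\lambda)$, which exceeds $1-e^{-\lambda/2}$ once $\lambda$ is large.

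For the inductive step I would iterate the hypothesis $n$ times along descent paths, with $n=n(i)$ to be fixed at the end. A union-bound plus concentration argument shows that within the descendant tree of any activated vertex $w$, a constant fraction of the depth-$n$ descendants are activated with probability at least $1-\exp(-c_1\lambda i n)$; this is the content of Claim~2 in the sketch. To turn activations at depth $n$ into returns to $w$, I would appeal to the near-symmetry of hitting probabilities from Appendix~\ref{app:hitting}: for a descendant $v$ at depth $n$, the probability that the loop-erased walk from $v$ hits $w$ is comparable (up to a universal constant) to the probability that the walk from $w$ hits $v$. Each activated depth-$n$ vertex then independently sends back a frog with probability of the same order as the forward hitting probability, and a Chernoff bound produces on the order of $n$ returns to $w$ with probability at least $1-\exp(-c_2 n)$ (Claim~3). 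Now suppose $\Par{u}$ is activated but $u$ is not. The no-long-pipe condition locates, within bounded distance, a sibling $u'$ of $u$ (or of some short-range ancestor) which must be activated; applying Claim~3 to $u'$ deposits order $n$ frogs at the branching vertex just above $u$, each of which takes the loop-erased step to $u$ with probability at least $c$. The probability that none does is at most $\exp(-c_3 n)$, and choosing $n$ of order $\lambda(i+1)/(2c_3)$ closes the induction.

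With Proposition~\ref{pr:iterate} in hand, letting $i\to\infty$ shows that each child of the root is activated with probability arbitrarily close to $1$, and Claim~3 applied now at the root gives any prescribed finite number $N$ of returning frogs with probability arbitrarily close to $1$. Monotonicity of the truncated model in the initial frog configuration then produces almost surely infinitely many returns, which is recurrence for the truncated model and hence for the original frog model. The main obstacle is the inductive step of Proposition~\ref{pr:iterate}: the scale $n=n(i)$ must be chosen large enough that the failure probability beats $e^{-\lambda(i+1)/2}$, yet small enough that the error accumulated by iterating the hypothesis $n$ times and the dependencies inherent in a non-self-similar geometry remain controllable, all while remaining uniform across vertices where the pipe-length bound and hitting-probability symmetry must be invoked together.
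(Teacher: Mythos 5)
Your overall architecture matches the paper's: reduce to the truncated (loop-erased) model, obtain uniform bounds on the loop-erased transition probabilities, and run the bootstrap of Proposition \ref{pr:iterate} through essentially the same three claims. However, two of your quantitative steps do not hold as stated, and the first of them would break the induction.

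First, your Claim 3 derives ``on the order of $n$ returns to $w$'' from the activated vertices at depth $n$ alone. This cannot work: the activated depth-$n$ vertices carry total harmonic measure at most $1$, and each sends frogs back with probability comparable to its harmonic measure, so the expected number of returns they generate is $O(\lambda_0)$, uniformly in $n$. With only $O(1)$ expected returns, the probability that no returning frog steps to $u$ is a constant $e^{-c\lambda_0}$ rather than $e^{-c_3 n}$, and the bootstrap stalls after finitely many iterations instead of improving the exponent by $\lambda/2$ each time. The missing observation is \eqref{lbshj0tn}: since every ancestor of an activated vertex is activated, the harmonic mass of activated vertices at level $j$ is non-increasing in $j$, so once the level-$n$ mass is at least $1/2$, the mass summed over levels $0,\dots,n$ is at least $(n+1)/2$; it is this aggregation over all $n+1$ levels that yields returns dominating $\Poiss\big(C'\lambda_0\frac{n+1}{2}\big)$.

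Second, your Claim 2 asserts that a constant harmonic fraction of the depth-$n$ descendants of an activated $w$ is activated with probability $1-\exp(-c_1\lambda i n)$. No concentration argument gives a bound improving in $n$ here: the inductive hypothesis permits a single child of $w$ to fail to activate with probability as large as $e^{-i\lambda/2}$, and when $w$ has few children this single failure already removes a constant fraction of the depth-$n$ harmonic mass. The correct failure probability is of order $e^{-i\lambda/2}/\beta$ (the paper's bound $e^{-\alpha}\big(4n^2e^{-\alpha}+\frac{2ne^{-\alpha}}{\beta}+\frac{1}{\beta}\big)$, proved via the two designated activated siblings $v_o$ and $v''$), and the induction closes only because this is multiplied by the extra factor $e^{-\lambda}$ coming from conditioning on the direct activation of $u$ from $\Par{u}$ having failed, as in \eqref{lbpnlgejp1gnlgej}. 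Your accounting attributes the gain to concentration in $n$ and so obscures where the improvement from $e^{-i\lambda/2}$ to $e^{-(i+1)\lambda/2}$ actually comes from. (Your choice of $n$ linear in $i$ is otherwise compatible with the argument; the paper's $n=\lfloor e^{\alpha/4}\rfloor$ is simply a more generous choice.)
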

	
\medskip
To prove Theorem \ref{theorem:mrr}, we begin in Section \ref{sec:truncated} by defining the truncated frog model referenced in the introduction, and then constructing the coupling that is used to show that it is dominated (in terms of the number of returns to the root) by the original model.  Then in Section \ref{sec:looperased} we examine some of the properties of loop erased random walk on trees in $\mathcal{T}^*_f$, achieving several bounds relating to transition probabilities that are needed in order to employ the argument that will be used in the proof of Theorem \ref{theorem:mrr}.  Finally in Section \ref{sec:proof} we present the proof of the theorem, which largely consists of an induction argument relating to the average density of activated vertices on each level of the tree.
	
	\noindent
	\subsection{Simplifications and the truncated frog model} \label{sec:truncated}
	We begin by noting that if we have any tree $T\in\mathcal{T}^*_f$ to which we attach some (possibly infinite) collection of bushes in order to obtain the tree $T'$, then the frog model on $T'$ with $\text{Poiss}(\lambda)$ sleeping frogs per non-root vertex dominates the frog model on $T$ (for the same Poisson mean $\lambda$) with respect to the number of returns to the root (we can see this by simply ignoring the excursions that activated particles originating in $T$ make into $T'\setminus T$).  Hence, if the model with Poisson mean $\lambda$ is recurrent on $T$, then it must also be recurrent on $T'$.  Now combining this with the fact that a rooted tree $T$ is in $\mathcal{T}^*_f$ if and only if its backbone $T_r$ lies in $\mathcal{T}^*_f$, we see that in order to prove the existence of a recurrent regime for all $T\in\mathcal{T}^*_f$, it will suffice to do so just for those $T\in\mathcal{T}^*_f$ that do not have any leaves.  
	
	To further simplify the problem, we now start with a tree $T\in\mathcal{T}^*_f$ that does not have any leaves, and we assign $\text{Poiss}(\lambda)$ sleeping frogs to each non-root vertex in $T$ that has at least two children, while leaving all other non-root vertices unoccupied.  Since the interiors of all pipes now contain no sleeping frogs, this version of the frog model will be unchanged, with respect to the distribution of the number of returns to the root, if we replace each pipe with a single edge to which we assign a resistance equal to the length of the original pipe.  Also noting that we can ignore the case where the root of $T$ has only one child (since infinitely many returns to the first descendant of the root with at least two children will automatically imply almost surely infinitely many returns to the root), as well as the case where the root has exactly two children (since the absence of a recurrent regime on such a tree would suggest that adding an additional child to the root to which we attach some arbitrary infinite subtree would still yield a tree without a recurrent regime), we now see that in order to establish Theorem \ref{theorem:mrr}, it will suffice to establish the existence of a recurrent regime for the frog model on any infinite rooted tree $T$ of bounded degree where all vertices have degree at least three, and to which we assign uniformly bounded positive integer resistances to each of the edges of $T$.
	
	Denoting the collection of rooted trees with weighted edges described in the previous paragraph as $\mathcal{T}_w$, we now proceed to define what we referred to as the truncated frog model in the introduction, on the set of all $T\in\mathcal{T}_w$.  The dynamics of this model (as laid out in \cite{mr}) are as follows:
	
	\begin{enumerate}
		\item Like the first model, this model begins with a single active particle at the root, and i.i.d.\ $\text{Poiss}(\lambda)$ sleeping particles at all non-root vertices.
		
		\item A sleeping particle is activated when the vertex at which it resides is landed on by an active particle.  Upon activation, particles perform independent loop-erased random walks, which terminate upon hitting the root.
		
		\item In addition, any time an active particle takes a step away from the root and lands on a vertex which has already been landed on by at least one other active particle, the particle is eliminated.  If more than one particle simultaneously land on a vertex which had not previously been landed on by an active particle, all but one of these particles are eliminated.
	\end{enumerate}

	\begin{lemma}
		The number of visits to the root in the frog model stochastically dominates the number of visits in the truncated frog model.
	\end{lemma}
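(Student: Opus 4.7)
The plan is to construct an explicit coupling of the ordinary and truncated frog models on a common probability space, and then to compare visit counts to the root pointwise under that coupling. The shared randomness consists of: (i) a single $\Poiss(\lambda)$ variable at each non-root vertex giving its number of sleeping frogs, used in both models; and (ii) for each frog $p$ (labelled by its starting vertex together with an index among frogs starting there), an independent discrete-time simple random walk trajectory $W^p = (X_0^p, X_1^p, \ldots)$ from that starting vertex. In the ordinary model, once $p$ is activated it follows $W^p$; in the truncated model, once $p$ is activated it follows its loop-erasure $L^p$, terminating upon hitting the root and subject to the stated elimination rule. The crucial observation underlying the whole comparison is that every vertex appearing on $L^p$ also appears on $W^p$, since loop-erasure only deletes vertices from a trajectory.

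The central step is the claim that, under this coupling, the set of vertices ever activated in the truncated model is (almost surely) contained in the set of vertices ever activated in the ordinary model. I would prove this by induction on the order of activations in the truncated model. The base case, the root itself, is trivial. For the inductive step, suppose a vertex $v$ is activated in the truncated model by some frog $p$ whose starting vertex $u$ was activated strictly earlier. By the inductive hypothesis $u$ is activated in the ordinary model, so $p$ is eventually activated there too. Since $v$ lies on $L^p$ it also lies on $W^p$, so the full random walk of $p$ visits $v$ in the ordinary model, activating it.

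With this containment in hand, the root-visit comparison is immediate. In the truncated model, every visit to the root is contributed by some frog that is activated, is never eliminated, and whose loop-erased walk hits the root; each such frog contributes exactly one visit. By the containment claim the same frog is activated in the ordinary model, and since $L^p$ hits the root so does $W^p$, giving at least one visit to the root in the ordinary model. Summing over these frogs, the number of root-visits in the ordinary model is pointwise at least the number in the truncated model under the coupling, which is precisely the desired stochastic domination.

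The main subtlety, and the reason for choosing a ``global trajectory'' comparison rather than a naive step-by-step coupling, is that the loop-erased walk is temporally compressed relative to the simple random walk: at a fixed time step, a given frog typically sits at different vertices in the two models, and the activation times do not line up. This is what forces the induction to be run on the activation order within the truncated model (rather than on physical time) and the coupling comparison to be made at the level of entire trajectories, using only the inclusion of vertex sets $L^p \subseteq W^p$ rather than any step-by-step agreement.
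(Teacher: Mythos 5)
Your proposal is correct and follows essentially the same route as the paper, which gives only a two-line sketch (deferring full details to \cite{mr}): replace each trajectory by its loop-erasure, note this and the elimination rule can only shrink the set of activated particles, and conclude. Your explicit coupling via shared Poisson counts and shared SRW trajectories, together with the induction on activation order using $L^p \subseteq W^p$, is exactly the rigorous form of that sketch.
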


	 This is proven in full detail in \cite{mr}, but we provide a short sketch here for completion. 
	\begin{proof}
		If we replace each trajectory with its loop-erased version, this only decreases the set of particles that are activated.  Similarly, removing particles can only decrease the set of activated particles, thereby decreasing the set of particles that hit the root.
	\end{proof}
	\noindent
	\subsection{Loop erased random walk}\label{sec:looperased}
	
	In this section we will let $\{X_j\}$ represent loop erased random walk on $T$, while denoting the probability measure on non-backtracking paths associated with loop-erased random walk beginning at a vertex $v$ as  $\Prob_v$, and letting $p(v,v')$, for distinct vertices $v$ and $v'$, represent the probability that random walk beginning at $v$ (and with transition probabilities in accordance with the assigned edge resistances) ever hits $v'$.  
	
\begin{lemma}
			If $T$ is a weighted tree that has minimum degree $\delta \geq 3$, maximum degree $\Delta$, and all edge resistances in $[1,r]$, then for a loop-erased random walk $\{X_n\}$ on $T$, we have \begin{align}
			\frac{1}{2\Delta r} \leq\ \Prob_v(X_1 = v_i) \leq \frac{r}{r + \delta - 2} \label{lbpfsn} \\
			\mathrm{and}\quad \Prob(X_{n+1} = v_{i'} \,| \,X_n = v, X_{n-1} = v_i) \geq \frac{1}{2 \Delta r^2} \label{bfgtransprle}
			\end{align}  
	\end{lemma}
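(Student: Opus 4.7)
The plan is to derive explicit formulas for the LERW transition probabilities by the last-exit decomposition, convert the resulting escape probabilities into effective conductances on the tree, and then obtain a uniform lower bound on those conductances via Rayleigh monotonicity.

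By the chronological loop-erasure construction, $X_1$ equals the step taken by the underlying weighted SRW right after its final visit to $v$. Summing over the (geometric) number of returns to $v$ and applying the strong Markov property gives
\[
\Prob_v(X_1 = v_i) \;=\; \frac{c(v,v_i)\bigl(1-p(v_i,v)\bigr)}{\sum_j c(v,v_j)\bigl(1-p(v_j,v)\bigr)},
\]
where $c(e) = 1/r(e)$ is the edge conductance. Because on a tree the past trajectory $X_0, \ldots, X_{n-1}$ is separated from the component containing the walk's current position only through $v_i = X_{n-1}$, the domain Markov property of LERW gives the same formula for the conditional transition $\Prob(X_{n+1} = v_{i'} \mid X_n = v, X_{n-1} = v_i)$, but with the denominator's sum restricted to $j$ with $v_j \neq v_i$. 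Next, since the subtree $T_{v_j}$ (the component of $T$ minus the edge $(v,v_j)$ containing $v_j$) can only be left toward $v$ through that single edge, viewing $v_j$ as facing two parallel competing conductors---the edge back to $v$ with conductance $c(v,v_j)$ and the exit to infinity through $T_{v_j}$ with effective conductance $C^*_{v_j}$---gives the identity
\[
1 - p(v_j,v) \;=\; \frac{C^*_{v_j}}{c(v,v_j) + C^*_{v_j}}.
\]

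The heart of the argument, and what I expect to be the main obstacle, is a uniform lower bound $C^*_{v_j} \geq (\delta - 2)/r$. Since every vertex of $T$ has degree at least $\delta$ and at most one incident edge is removed when passing to $T_{v_j}$, every vertex of $T_{v_j}$ has at least $\delta - 1$ children in the rooted view. By Rayleigh monotonicity, pruning $T_{v_j}$ to a $(\delta-1)$-ary subtree and inflating each remaining resistance to $r$ can only decrease the effective conductance; on this minimal tree, self-similarity gives the recursion $C = (\delta-1)/(r + 1/C)$, which solves to $C = (\delta-2)/r$.

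Combining everything finishes the proof. Write $C(v \to v_j) := c(v,v_j) C^*_{v_j}/(c(v,v_j) + C^*_{v_j})$, so that the LERW transition probability becomes $C(v \to v_i)/\sum_j C(v \to v_j)$. Using $c(v,v_j) \in [1/r, 1]$ and $C^*_{v_j} \geq (\delta-2)/r$ yields $C(v \to v_j) \geq (\delta-2)/(r(\delta-1))$, while $C(v \to v_i) \leq c(v,v_i) \leq 1$. Since $v$ has at least $\delta - 1$ neighbors other than $v_i$, the tail sum is at least $(\delta - 2)/r$, giving the upper bound $r/(r + \delta - 2)$. For the first lower bound, pair $C(v \to v_i) \geq (\delta-2)/(r(\delta-1))$ with $\sum_j C(v \to v_j) \leq \sum_j c(v,v_j) \leq \Delta$ to obtain $\Prob_v(X_1 = v_i) \geq (\delta-2)/(r\Delta(\delta-1)) \geq 1/(2\Delta r)$ for $\delta \geq 3$; the same estimate, with the term $j = i$ deleted from the denominator, yields $\geq 1/(2\Delta r) \geq 1/(2\Delta r^2)$.
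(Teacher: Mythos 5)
Your argument is correct and follows essentially the same route as the paper: the last-exit decomposition yields the same transition formulas (your restricted-sum form of the conditional probability is algebraically equivalent to the paper's expression involving the factor $1-p(v,v_i)$), and the key estimate in both cases is the Rayleigh-monotonicity bound on the effective resistance of each branch obtained by comparison with the extremal $(\delta-1)$-ary tree with all resistances equal to $r$. Your constants even come out slightly sharper (e.g.\ $1/(2\Delta r)$ in place of $1/(2\Delta r^2)$ for the second bound), which of course still implies the stated inequalities.
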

	\begin{proof}
	First note that
	\begin{equation}\label{transprle}
	\Prob_v(X_1=v_i)=\frac{r^{-1}_i\big(1-p(v_i,v)\big)}{\sum_{j=1}^N r^{-1}_j\big(1-p(v_j,v)\big)}\end{equation}
	where $v_1,\dots,v_N$ represent the neighbors of the vertex $v$ and $r_j$ represents the resistance of the edge connecting $v$ to $v_j$.  In addition, for any $n\geq 1$ and distinct integers $i,i'$ between $1$ and $N$, we have that
	\begin{align}
	\Prob(X_{n+1}=v_{i'}|X_n=v,X_{n-1}=v_i)&=\Prob_{v_i}(X_2=v_{i'}|X_1=v) \nonumber \\
	&=\frac{r^{-1}_{i'}\big(1-p(v_{i'},v)\big)}{\big(1-p(v,v_i)\big)\Big(r^{-1}_i+\underset{j\neq i}{\sum}r^{-1}_j\big(1-p(v_j,v)\big)\Big)}\label{gtransprle}
	\end{align}
	where the absence of a subscript in the expression on the left reflects the fact that the equality holds regardless of the location of $X_0$.  We will work to obtain bounds on the expressions in \eqref{transprle} and \eqref{gtransprle} with respect to $\delta$, $\Delta$, and $r$.  To start, for any pair of neighboring vertices $v,v'$ in $T$, we let $T_{v,v'}$ represent the connected component of $v$ that we get by removing all of the neighbors of $v$ except for $v'$.  Then note that the expression for ${\bf P}_v(X_1=v_i)$ given in \eqref{transprle} can be written as 
	$${\bf P}_v(X_1=v_i)=\frac{\mathscr{R}^{-1}\Big(v\underset{T_{v,v_i}}{\longleftrightarrow}\infty\Big)}{\underset{j\leq N}{\sum}\mathscr{R}^{-1}\Big(v\underset{T_{v,v_j}}{\longleftrightarrow}\infty\Big)}$$
	where $\mathscr{R}\Big(v\underset{T'}{\longleftrightarrow}\infty\Big)$ denotes the effective resistance between $v$ and $\infty$ in the tree $T'$.
	
	To achieve a lower bound for $\mathscr{R}\Big(v\underset{T_{v,v_j}}{\longleftrightarrow}\infty\Big)$ (subject to the constraints that all vertices in the original tree $T$ have degree between $\delta$ and $\Delta$ and all edge resistances are between $1$ and $r$), we take all edge resistances equal to $1$ and assume all vertices in $T$ have degree $\Delta$, which gives us an effective resistance equal to $\frac{\Delta-1}{\Delta-2}$.  Likewise, for an upper bound on $\mathscr{R}\Big(v\underset{T_{v,v_j}}{\longleftrightarrow}\infty\Big)$, we take all edge resistances equal to $r$ and assume all vertices in $T$ have degree $\delta$, which gives us an effective resistance equal to $\frac{r(\delta-1)}{\delta-2}$.  Combining these bounds, we obtain the following upper and lower bounds on the expression from \eqref{transprle}:
	\begin{equation*}\frac{1}{2\Delta r}\leq\frac{1}{1+\frac{r(\delta-1)(\Delta-2)}{\delta-2}}\leq{\bf P}_v(X_1=v_i)\leq\frac{1}{1+\frac{(\delta-2)(\Delta-1)}{r(\Delta-2)}}\leq\frac{r}{r+\delta-2}
	\end{equation*}
	where the lower bound follows from plugging in the maximum possible value for $\mathscr{R}\Big(v\underset{T_{v,v_j}}{\longleftrightarrow}\infty\Big)$ for $j=i$ and the minimum possible for $j\neq i$, while setting $N=\Delta$, and the upper bound follows from plugging in the minimum possible value of $\mathscr{R}^{-1}\Big(v\underset{T_{v,v_j}}{\longleftrightarrow}\infty\Big)$ for $j=i$ and the maximum possible for $j\neq i$, while setting $N=\delta$.  Similarly, using \eqref{gtransprle}, along with the bound $\mathscr{R}\Big(v\underset{T}{\longleftrightarrow}\infty\Big)\geq\frac{\Delta-1}{\Delta(\Delta-2)}$ (in addition to the lower and upper bounds on $\mathscr{R}\Big(v\underset{T_{v,v_j}}{\longleftrightarrow}\infty\Big)$ derived above), we obtain a lower bound on $\Prob(X_{n+1}=v_{i'}|X_n=v,X_{n-1}=v_i)$ via the following string of inequalities:
	\begin{align*}\Prob(X_{n+1}=v_{i'}|X_n=v,X_{n-1}=v_i)&=\frac{1\Big/\mathscr{R}\Big(v\underset{T_{v,v_{i'}}}{\longleftrightarrow}\infty\Big)}{r_i\Big(r^{-1}_i+\underset{j\neq i}{\sum}r^{-1}_j\big(1-p(v_j,v)\big)\Big)\Big/\mathscr{R}\Big(v\underset{T_{v_i,v}}{\longleftrightarrow}\infty\Big)}\\
	&\geq\frac{\mathscr{R}\Big(v\underset{T_{v_i,v}}{\longleftrightarrow}\infty\Big)}{r\mathscr{R}\Big(v\underset{T_{v,v_{i'}}}{\longleftrightarrow}\infty\Big)\bigg(1+\mathscr{R}^{-1}\Big(v\underset{T}{\longleftrightarrow}\infty\Big)\bigg)}\\
	&\geq\frac{\frac{\Delta-1}{\Delta-2}}{r^2\frac{\delta-1}{\delta-2}\Big(1+\frac{\Delta(\Delta-2)}{\Delta-1}\Big)}\\
	&\geq\frac{1}{2\Delta r^2}.\end{align*}
	
	\end{proof}
	
	\bigskip
	\noindent
	\subsection{Proof of recurrence}  \label{sec:proof}
		
	Let $T$ be a tree in $\mathcal{T}_w$ (as defined in Section \ref{sec:truncated}) with maximum degree $\Delta$ and maximum edge resistance $r$.  Denoting the vertex set of $T$ as $\mathcal{V}$, we now define the probability space $$\Omega:=\bigg(\mathbb{N}\times\Big(\mathcal{V}^{(\mathbb{N}\setminus\{0\})\times\mathbb{N}}\Big)\times\Big([0,1]^{(\mathbb{N}\setminus\{0\})\times\mathbb{N}}\Big)\bigg)^{\mathcal{V}}$$ with probability measure $\TFM^{(\lambda_0)}$ under which all coordinates are independent, and where for each $v\in\mathcal{V}$ the three independent components are distributed in the following way: The first component is a random variable with distribution $\text{Poiss}(\lambda_o)$ representing the number of frogs originating at $v$ (unless $v$ is the root, in which case the first component is just the constant $1$). The second component represents the loop-erased random walks that will be performed by the particles originating at $v$ if they are activated (note that as a formal matter, these walks persist past the time at which they potentially terminate, and in fact each vertex is endowed with an infinite sequence of walks, rather than just those that correspond to the actual particles originating at that vertex). Finally, the third component is a collection of sequences of i.i.d. uniform $[0,1]$ random variables designed to break ties, in the sense that if the $j$th particle originating at $v$ jumps on its $i$th step onto a previously unvisited vertex at the same time as some other particle(s), then this particle terminates {\it unless} the value of the random variable associated with its $i$th step is greater than those of these other particles.
	
	Having now defined the probability space associated with the truncated frog model on $T$, we begin with a basic lemma: 
	
	\begin{lemma}\label{lem:basejg}
		Let $v \in T$ be a vertex of distance at least $2$ from the root and set $\lambda_0 = 2\Delta r \lambda$.  Then $$\TFM^{(\lambda_o)}(v \text{ is activated }\,|\,\Par{v} \text{ is activated} ) \geq 1 - e^{-\lambda}\,.$$
	\end{lemma}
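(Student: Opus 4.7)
The plan is to lower bound the activation probability of $v$ by using only the frogs that originally reside at $\Par{v}$, exploiting the uniform lower bound \eqref{lbpfsn} on the first-step probabilities of loop-erased walk. The choice $\lambda_0 = 2\Delta r\lambda$ is tailored so that Poisson thinning by the factor $\frac{1}{2\Delta r}$ leaves a Poisson variable of mean exactly $\lambda$.

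First I would verify that the conditioning on $\{\Par{v}\text{ is activated}\}$ preserves the relevant distributions at $\Par{v}$. Since $v$ is at distance at least $2$ from the root, $\Par{v}$ is not the root and hence carries a genuine $\Poiss(\lambda_0)$-distributed pile of sleeping frogs. In the truncated model the frogs at $\Par{v}$ remain stationary until $\Par{v}$ is activated, so whether or not $\Par{v}$ becomes activated is determined by the coordinates of $\Omega$ indexed by vertices other than $\Par{v}$, together with the walks and tie-breaking data at $\Par{v}$ itself. In particular, the Poisson count at $\Par{v}$ is independent of the event $\{\Par{v}\text{ is activated}\}$, and so are the loop-erased walks those frogs will perform once activated.

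Conditional on $\{\Par{v}\text{ is activated}\}$, let $N\sim\Poiss(\lambda_0)$ be the number of frogs at $\Par{v}$. Each one independently performs a loop-erased walk starting at $\Par{v}$, and by \eqref{lbpfsn} the probability that its first step lands on $v$ is at least $\frac{1}{2\Delta r}$. Poisson thinning then implies that the number of frogs whose first step is $v$ stochastically dominates $\Poiss\!\bigl(\lambda_0\cdot\tfrac{1}{2\Delta r}\bigr)=\Poiss(\lambda)$, so the probability that no such frog exists is at most $e^{-\lambda}$. Whenever at least one such frog exists, $v$ is landed on at the next time step: either $v$ had already been visited (hence was already activated), or it had not, in which case the truncated model's tie-breaking rule retains one of the arrivals and $v$ becomes activated. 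Either way $v$ is activated, yielding the claimed bound $1-e^{-\lambda}$.

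The only step that requires care is the independence argument in the second paragraph; the remainder is a direct combination of \eqref{lbpfsn} with Poisson thinning, and no delicate estimate beyond those tools is needed.
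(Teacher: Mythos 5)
Your proposal is correct and follows essentially the same route as the paper: condition on $\Par{v}$ being activated, apply Poisson thinning to the $\Poiss(\lambda_0)$ frogs at $\Par{v}$ using the first-step lower bound \eqref{lbpfsn}, and conclude that the number of frogs stepping to $v$ dominates $\Poiss(\lambda)$. Your additional care about the independence of the Poisson count from the activation event and about the elimination/tie-breaking rule not obstructing activation is sound (and more explicit than the paper's one-line argument), though note that the activation of $\Par{v}$ in fact depends only on data at other vertices, since frogs at $\Par{v}$ are stationary until that moment.
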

	\begin{proof}
		Set $p$ to be the probability that a loop-erased random walk starting at $\Par{v}$ goes to $v$.  Then by Poisson thinning, the number of particles originating at $\Par{v}$ that go to $v$, conditioned on $\Par{v}$ being activated, is $\Poiss(p \lambda_0)$.  Alongside the bound from \eqref{lbpfsn}, this implies that conditioned on $\Par{v}$ being activated, the number of particles that hit $v$ stochastically dominates $\Poiss(\lambda)$, thus completing the proof.
	\end{proof}
	
\medskip
Now for each non-root vertex $v$ of $T$, let ${\sf TFM}^{(\lambda_o)}_v$ represent the measure associated with the truncated model on $T$, conditioned on the vertex $v$ eventually being landed on by the particle starting at the root (note that when we restrict our focus to the behavior of the truncated model inside $T(v)$, we can without loss of generality define ${\sf TFM}^{(\lambda_o)}_v$ by conditioning on the weaker assumption that $v$ is merely landed on by any particle).  Lemma \ref{lem:basejg} immediately implies that for every $u \in T(v)$, we have
	\begin{equation}\label{pagap}
	{\sf TFM}^{(\lambda_o)}_v(u\text{ is activated } | \Par{u} \text{ is activated}  )\geq 1-e^{-\lambda}.
	\end{equation} 
	Ultimately, the inequality in \eqref{pagap} will be used as the base case for an induction argument.  
	
	\medskip
	Much of the remainder of the proof of Theorem \ref{theorem:mrr} consists of establishing the following proposition:

	\begin{prop} \label{pr:iterate}
		Let $\mathscr{T} = \mathscr{T}(\delta,\Delta,r)$ denote the set of all weighted trees for which all vertices have degree between $\delta \geq 3$ and $\Delta$ and all resistances are between $1$ and $r$.  Then for $\lambda$ sufficiently large (and $\lambda_o=2\Delta r\lambda$) and each $i \geq 2$, the inequality 
		$${\sf TFM}^{(\lambda_o)}_v(u \text{ is activated } | \Par{u} \text{ is activated}  )\geq 1 - e^{-i \lambda / 2} $$
		for all trees $T \in \mathscr{T}$, vertices $v\in T\setminus\{{\bf 0}\}$, and $u \in T(v)$, in fact implies $${\sf TFM}^{(\lambda_o)}_v(u \text{ is activated } | \Par{u} \text{ is activated}  )\geq 1 - e^{-(i+1) \lambda / 2}$$
		for all $T \in \mathscr{T}$, $v\in T\setminus\{{\bf 0}\}$, and $u \in T(v)$.
	\end{prop}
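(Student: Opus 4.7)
The plan is to boost the bound $1 - e^{-i\lambda/2}$ up to $1 - e^{-(i+1)\lambda/2}$ by producing, in the scenario where $\Par{u}$ is activated and $u$ is not, a large number of effectively independent attempts at activating $u$ from outside $T(u)$. Fix $n = n(i, \lambda, \Delta, r)$ to be chosen linear in $i+1$ with a large multiplicative constant. Since $\deg(\Par{u}) \geq 3$, $u$ has a sibling $u'$; Lemma \ref{lem:basejg} applied to $u'$ gives
\[
\TFM_v^{(\lambda_o)}\bigl(u' \text{ activated} \,\big|\, \Par{u} \text{ activated}\bigr) \geq 1 - e^{-\lambda},
\]
so it suffices to work under the further conditioning that $u'$ is activated and to absorb the $e^{-\lambda}$ contribution into the final error.

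The next step is to cascade activation down the subtree $T(u')$. For any path $u' = w_0, w_1, \ldots, w_n$ inside $T(u')$, telescoping the inductive hypothesis along the path—and using that on a tree a vertex cannot be activated before its parent, so conditioning on $w_j$ activated is the same as conditioning on $w_0, \ldots, w_j$ all activated—yields
\[
\TFM_v^{(\lambda_o)}\bigl(w_n \text{ activated} \,\big|\, u' \text{ activated}\bigr) \;\geq\; \bigl(1 - e^{-i\lambda/2}\bigr)^n.
\]
Combining this with a branching-process style concentration argument—exploiting that every vertex has at least two children, so that the activated subtree inside $T(u')$ dominates a supercritical Galton--Watson process—shows that a constant fraction of vertices at each depth $k \leq n$ of $T(u')$ are activated with failure probability negligible compared to $e^{-(i+1)\lambda/2}$. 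Next, invoking the near-symmetry of hitting probabilities from Appendix \ref{app:hitting} and summing across depths $k = 1,\ldots,n$ the expected number of loop-erased trajectories passing through $u'$ (each depth contributing $\Theta(\lambda_o)$) gives $\Theta(n\lambda_o)$ particles arriving at $u'$ with high probability.

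Finally, with $\Omega(n\lambda_o)$ particles landing on $u'$ while $u$ is still unvisited, each such particle may take the two-step continuation $u' \to \Par{u} \to u$; by \eqref{bfgtransprle} applied twice this occurs with probability at least $(2\Delta r^2)^{-2}$, and the continuations made by distinct incoming trajectories are mutually independent. The probability that none of these particles hits $u$ is therefore at most $\exp(-c\, n\, \lambda_o)$, which by choosing the constant in $n = C(i+1)$ large enough (depending on $\lambda, \Delta, r$) can be made smaller than $\tfrac13 e^{-(i+1)\lambda/2}$. The hardest step in executing the plan will be the concentration in the cascade: a naive Markov bound on the number of un-activated depth-$n$ descendants only yields an error of order $n e^{-i\lambda/2}$, which is too weak to absorb into the target $e^{-(i+1)\lambda/2}$ once $n$ grows linearly with $i$. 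Overcoming this requires exploiting the near-independence across disjoint branches of $T(u')$ in the truncated model, replacing the union bound by an exponential-in-$n$ survival estimate for the supercritical branching-process proxy; this is also the structural place where the assumption of large $\lambda$ (equivalently large $\lambda_o$) is used, in order to push the per-level activation probability close enough to $1$ for supercriticality.
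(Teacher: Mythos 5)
Your high-level plan (cascade activation down a sibling's subtree, bring $\Theta(n\lambda_o)$ particles back near $u$, and use the uniform transition bound \eqref{bfgtransprle} to convert these into many independent shots at $u$) matches the paper's, but your error accounting has a fatal additive loss at the very first step. You pay ${\sf TFM}^{(\lambda_o)}_v(u'\text{ not activated}\,|\,\Par{u}\text{ activated}) \leq e^{-\lambda}$ as an additive term to be ``absorbed into the final error''; since the target failure probability is $e^{-(i+1)\lambda/2} \leq e^{-3\lambda/2} \ll e^{-\lambda}$ for $i \geq 2$, this term cannot be absorbed, and the argument as written proves at best $1 - e^{-\lambda}$, which is weaker than the inductive hypothesis you started from. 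The paper avoids this entirely with a structural observation you are missing: if $u$ is hit neither by the particle that activated $\Par{u}$ nor by any particle originating at $\Par{u}$ (an event of probability at most $e^{-\lambda}$ by \eqref{pagap}), then---because loop-erased trajectories never backtrack---the activating particle must have continued on to some sibling $u'$ of $u$, so a sibling is activated \emph{for free} on that event. This turns the $e^{-\lambda}$ into a multiplicative prefactor, as in \eqref{lbpnlgejp1gnlgej}: the failure probability is at most $e^{-\lambda}$ times the conditional probability that the sibling route fails, and the latter only needs to be $O(e^{-i\lambda/2})$ to close the induction.

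That relaxation is also what rescues your acknowledged ``hardest step.'' As you set things up, you need the concentration in the cascade to fail with probability $o(e^{-(i+1)\lambda/2})$, and you propose to get this from a supercritical branching-process domination with independence across branches; but the activation events of distinct children of a vertex are not independent in the truncated model (they are driven by the same finite pool of trajectories emanating from the parent and the activating particle), and the event you need is not mere survival but that a constant harmonic-measure fraction of the depth-$n$ vertices is activated, so the proposed exponential-in-$n$ estimate is neither justified nor obviously sufficient. The paper instead settles for a concentration failure probability of order $e^{-\alpha}/\beta$ (Claim 2), obtained from the first-moment bound of Claim 1 via Markov's inequality together with a two-sibling trick (the events $B$ and $C$); this is \emph{not} negligible compared with $e^{-(i+1)\lambda/2}$, but it does not need to be once the multiplicative $e^{-\lambda}$ prefactor from the previous paragraph is in place. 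With that restructuring, your choice of $n$ growing with $i$ (the paper takes $n = \lfloor e^{\alpha/4}\rfloor$, but a large linear choice also works for the final term) and the concluding Poissonization step go through essentially as you describe.
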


	\begin{proof} The law of the loop erased random walk beginning at the root on a tree $T$ is called the \emph{harmonic measure}, which we will denote by ${\sf HARM}_T$.  Now we let $\alpha$ be some constant greater than $\lambda$ (with $\lambda$ to be chosen later and $\lambda_o$ again equal to $2\Delta r\lambda$), and we assume that
		\begin{equation} \label{eq:inductive}
		{\sf TFM}^{(\lambda_o)}_v(u \text{ is activated } | \Par{u} \text{ is activated}  ) \geq 1-e^{-\alpha}
		\end{equation} 
		for all $T \in \mathscr{T}$, $v\in T\setminus\{{\bf 0}\}$, and $u \in T(v)$.  Subject to this assumption, we now proceed to prove a series of three claims.
		
		\medskip
		\noindent
		{\bf Claim 1:} Let $\E^{(\lambda_o)}_v$ represent expectation with respect to ${\sf TFM}^{(\lambda_o)}_v$, and let $T_n(v)$ represent the vertices on level $n$ of the tree $T(v)$.  Then \begin{equation}\label{lbehs}\E^{(\lambda_o)}_v\bigg[\sum_{\substack{v'\in T_n(v) \\ v'\text{ is activated}}}{\sf HARM}_{T(v)}(v')\bigg]\geq(1-e^{-\alpha})^n\,.\end{equation}
		
		\bigskip
		\noindent
		{\sc Proof of Claim 1:} Let $v' \in T_n(v)$ and $v = u_0, u_1, \ldots, u_n = v'$ denote the path from $v$ to $v'$.  
		Iterating assumption \eqref{eq:inductive} implies 
		\begin{equation}\label{bcindnl}
		{\sf TFM}^{(\lambda_o)}_v(v' \text{ is activated})=\prod_{i=1}^n{\sf TFM}^{(\lambda_o)}_v(u_{i} \text{ is activated}\ |\  \Par{u_i} \text{ is activated} )\geq (1-e^{-\alpha})^n\,.
		\end{equation}
		Utilizing $\sum_{v' \in T_n(v)} {\sf HARM}_{T(v)}(v') = 1$ and taking expectation completes the proof of the claim. \hfill $\blacksquare$
		
		\bigskip
		Now if $n$ is chosen so it is $o(e^{\alpha})$, \eqref{lbehs} shows that a large proportion of vertices in $T_n(v)$---when weighted by the harmonic measure--- are typically activated.  For the proof of the proposition, we want to show that at least a constant proportion are activated with high probability.  This takes the form of the following claim:

		\medskip
		\noindent
		{\bf Claim 2:} Set $\beta := 1 - \frac{2r}{2r + \delta - 2} = \frac{\delta-2}{2r + \delta - 2}$; for $n\geq 0$, if we condition on the non-root vertex $v$ of $T$ being activated, then with probability at least $$1 - e^{-\alpha}\left(4 n^2 e^{-\alpha} + \frac{2n e^{-\alpha}}{\beta} + \frac{1}{\beta} \right)\,,$$
		there exists a vertex $v^* \in T_1(v)$ so that \begin{equation} \label{eq:half}
		\sum_{\substack{v' \in T_n(v^*) \\ v' \text{ is activated} }} {\sf HARM}_{T(v^*)}(v') \geq \frac{1}{2}\,.\end{equation}
		 
		\medskip
		\noindent
		{\sc Proof of Claim 2:} By the upper bound in \eqref{lbpfsn}, we see that ${\sf HARM}_{T(v)}(v')\leq 1-\beta$ for every vertex $v'\in T_1(v)$ (observe that we've replaced $r$ by $2r$ in this upper bound in order to account for the fact that the vertex $v$ may only have degree $\delta-1$ in $T(v)$).  Hence, it follows that if only one vertex in $T_1(v)$ is activated, then the sum in \eqref{lbehs}, for the case where $n=1$, is no greater than $1-\beta$.  Combining this with the lower bound on the expectation given in \eqref{lbehs} (again for $n=1$), we can then conclude that \begin{equation}\label{ubova}{\sf TFM}^{(\lambda_o)}_v\big(\#\{\text{activated vertices in }T_1(v)\}\leq 1\big)\leq{\sf TFM}^{(\lambda_o)}_v\bigg(\sum_{\substack{v'\in T_1(v) \\ v'\text{ is activated}}}{\sf HARM}_{T(v)}(v')\leq 1-\beta\bigg)\leq\frac{e^{-\alpha}}{\beta}.\end{equation}Next let $v_o$ represent the vertex in $T_1(v)$ that is hit by the particle which activated $v$, and let $A$ represent the event that at least two vertices in $T_1(v)$ are activated.  If we now take any positive integer $n$ for which $\log n<\alpha$, then using \eqref{lbehs} and \eqref{ubova}, we get
		\begin{align}
		{\sf TFM}^{(\lambda_o)}_v\bigg(\sum_{\substack{v'\in T_n(v_o) \\ v'\text{ is activated}}}{\sf HARM}_{T(v_o)}&(v')\geq\frac{1}{2}\bigg|A\bigg)\geq 1-{\sf TFM}^{(\lambda_o)}_v(A^c)\nonumber\\&-\sum_{v_i\in T_1(v)}{\sf TFM}^{(\lambda_o)}_v(v_o=v_i)\cdot{\sf TFM}^{(\lambda_o)}_{v_i}\bigg(\sum_{\substack{v'\in T_n(v_i) \\ v'\text{ is activated}}}{\sf HARM}_{T(v_i)}(v')<\frac{1}{2}\bigg)\nonumber\\ 
		&\geq 1-\frac{e^{-\alpha}}{\beta}-2n e^{-\alpha}\label{lbsgohga}
		\end{align}
		where the last term in the expression on the third line follows from \eqref{lbehs}, along with the fact that $(1-e^{-\alpha})^n\geq 1-n e^{-\alpha}$.  Letting $v''$ represent the left most vertex in $T_1(v)\setminus\{v_o\}$ that is hit by either a particle originating at $v$, or a particle originating in $T(v_o)$, and denoting the events
		$$\left\{\underset{\substack{v'\in T_n(v_o) \\ v'\text{ is activated}}}{\sum}{\sf HARM}_{T(v_o)}(v')\geq\frac{1}{2}\right\}\ \ \text{and  }\left\{\underset{\substack{v'\in T_n(v'') \\ v'\text{ is activated}}}{\sum}{\sf HARM}_{T(v'')}(v')\geq\frac{1}{2}\right\}$$
		as $B$ and $C$ respectively (note that $C\subseteq A$), we observe that
		\begin{align}\label{bbciccga}&{\sf TFM}^{(\lambda_o)}_v(B^c\cap C^c|A)={\sf TFM}^{(\lambda_o)}_v(B^c|A)\cdot{\sf TFM}^{(\lambda_o)}_v(C^c|A\cap B^c)\\ &={\sf TFM}^{(\lambda_o)}_v(B^c|A)\cdot\sum_{v_j\in T_1(v)}{\sf TFM}^{(\lambda_o)}_v(v''=v_j|A\cap B^c)\cdot{\sf TFM}^{(\lambda_o)}_v\Bigg(\underset{\substack{v'\in T_n(v'') \\ v'\text{ is activated}}}{\sum}{\sf HARM}_{T(v'')}(v')<\frac{1}{2}\Bigg|v''=v_j\Bigg).\nonumber
		\end{align}
		Using the bound in \eqref{lbsgohga} in conjunction with the equality in \eqref{bbciccga}, we can now conclude that $${\sf TFM}^{(\lambda_o)}_v(B^c\cap C^c|A)\leq 2n e^{-\alpha}\Big(\frac{e^{-\alpha}}{\beta}+2n e^{-\alpha}\Big)$$where the second term in the product bounds ${\sf TFM}^{(\lambda_o)}_v(B^c|A)$, and the first term serves as a bound, that follows from \eqref{lbehs}, on the sum in \eqref{bbciccga}.  Alongside the upper bound on ${\sf TFM}^{(\lambda_o)}_v(A^c)$ given in \eqref{ubova}, this then implies that\begin{equation}\label{lbbucia}{\sf TFM}^{(\lambda_o)}_v(B\cup C)\geq{\sf TFM}^{(\lambda_o)}_v(\{B\cup C\}\cap A)\geq 1-e^{-\alpha}\Big(4n^2 e^{-\alpha}+\frac{2n e^{-\alpha}}{\beta}+\frac{1}{\beta}\Big),\end{equation}thus completing the proof of the second claim. \hfill $\blacksquare$
		
		\bigskip
		\noindent
		Claim 2 is actually stronger than it may initially appear; in fact, \eqref{eq:half} implies
		\begin{equation}\label{lbshj0tn}
		\sum_{j=0}^n\sum_{\substack{v'\in T_j(v^*) \\ v'\ \text{is activated}}}{\sf HARM}_{T(v^*)}(v')\geq\frac{n+1}{2}
		\end{equation} since the value of the inner sum above is decreasing with respect to $j$.  
		
		\bigskip
		We now want to look at the number of particles from inside $T(v)$ that return to $\Par{v}$.
		
		\medskip	
		\noindent
		{\bf Claim 3:} Let $V_v$ represent the number of active particles originating inside $T(v)$ that eventually hit $\Par{v}$.  Then there exists $C'>0$ such that, conditioned on \eqref{lbshj0tn}, we have \begin{equation}\label{cnphrslb}V_v\succeq\Poiss\Big(C'\lambda_o\frac{n+1}{2}\Big)\,.\end{equation}
		
		\bigskip
		\noindent
		{\sc Proof of Claim 3:} For a vertex $v' \in T_j(v^*)$ for $j \leq n$, let $p_v(v')$ denote the probability that a loop-erased random walk beginning at $v'$ hits $\Par{v}$.  Then again by Poisson thinning, conditioned on $v'$ being activated, the number of particles originating at $v'$ that hit $\Par{v}$ is $\Poiss(p_v(v') \lambda_0)$.  Call this random variable $N_{v,v'}$; since no particles in $T(v^*)$ that reach $\Par{v}$ activate any other particles along the way, we in fact have that conditioned on some subset $S \subset T(v^*)$ being activated, the family of random variables $\{N_{v, v'}\}_{v' \in S}$ are mutually independent.  Further, $p_v(v')$ and ${\sf HARM}_{T(v^*)}(v')$ differ by only a bounded multiplicative constant (this follows from applying Lemma \ref{lem:crphmjg} if we formally designate $\Par{v}$ as the new root of $T$), i.e. there exists a constant $C > 0$ so that for all $v' \in T(v^*)$ we have $p_v(v') \geq C \cdot {\sf HARM}_{T(v^*)}(v')$.  Hence, the proof of the claim is complete. \hfill $\blacksquare$
		
		\bigskip
		In order to use \eqref{cnphrslb} to complete the proposition, we start by letting $u \in T(v)$ with $u \neq v$.  
		Recall by \eqref{pagap},   
		if $\Par{u}$ is activated, then the probability that $u$ is hit by either the particle that activated $\Par{u}$, or one of the particles originating at $\Par{u}$, is at least $1-e^{-\lambda}$.  If none of these particles hit $u$, then the particle that activated $\Par{u}$ must travel to one of the siblings $u'$ of $u$ since the particles never backtrack. We then find that
		\begin{equation}\label{lbpnlgejp1gnlgej}
		{\sf TFM}^{(\lambda_o)}_v(u \text{ activated}\,|\,\Par{u} \text{ activated})\geq 1-e^{-\lambda}+e^{-\lambda}{\sf TFM}^{(\lambda_o)}_v(u\ \text{hit by particle from }T(u')\,|\,u'\ \text{activated}).
		\end{equation}
		Since Claims 2 and 3 
		together imply that, if $u'$ is activated, then with probability at least $1-e^{-\alpha}\Big(4n^2 e^{-\alpha}+\frac{2n e^{-\alpha}}{\beta}+\frac{1}{\beta}\Big)$, the number of particles originating in $T(u')$ that hit $\Par{u}$ stochastically dominates $\text{Poiss}\Big(C'\lambda_o\frac{n+1}{2}\Big)$, and because \eqref{bfgtransprle} implies that a particle that travels from $u'$ to $\Par{u}$, then hits $u$ with probability at least $\frac{1}{2\Delta r^2}>0$, this means that there exists $C''>0$ such that, with probability at least $1-e^{-\alpha}\Big(4n^2 e^{-\alpha}+\frac{2n e^{-\alpha}}{\beta}+\frac{1}{\beta}\Big)$, the number of particles originating in $T(u')$ that hit $u$ (conditioned on $u'$ being activated) stochastically dominates $\text{Poiss}\Big(C''\lambda_o\frac{n+1}{2}\Big)$.  Hence, this implies that \eqref{lbpnlgejp1gnlgej} becomes
		\begin{equation}\label{slbpnlgejp1gnlgej}{\sf TFM}^{(\lambda_o)}_v(u \text{ activated}\,|\,\Par{u}\text{ activated})\geq 1-e^{-\lambda}\bigg(e^{-\alpha}\Big(4n^2 e^{-\alpha}+\frac{2n e^{-\alpha}}{\beta}+\frac{1}{\beta}\Big)+e^{-C''\lambda_o\frac{n+1}{2}}\bigg).
		\end{equation}
		Selecting $n=\lfloor{e^{\alpha/4}\rfloor}$, \eqref{slbpnlgejp1gnlgej} then gives us\begin{equation}\label{slbnlgnlp1}{\sf TFM}^{(\lambda_o)}_v(u \text{ activated}\,|\,\Par{u}\text{ activated})\geq 1-e^{-\lambda}\Big(4e^{-\frac{3\alpha}{2}}+\frac{2e^{-\frac{7\alpha}{4}}}{\beta}+\frac{e^{-\alpha}}{\beta}+e^{-\frac{C''\lambda_o}{2}e^{\alpha/4}}\Big)\geq 1-e^{-(\alpha+\frac{\lambda}{2})}\end{equation}
		where the second inequality holds for all $\alpha\geq\lambda$, for $\lambda$ sufficiently large.  Therefore, this completes the proof of the proposition.
	\end{proof}
	\medskip
	\begin{proof}[Proof of Theorem \ref{theorem:mrr}:]
		Combining Proposition \ref{pr:iterate} with \eqref{pagap}, we now see that, for $\lambda$ sufficiently large, $${\sf TFM}^{(\lambda_o)}_v(u \text{ activated}\,|\,\Par{u} \text{ activated})=1$$ for every $u \in T(v)$.  Hence, if we let $v$ represent the first vertex hit by the particle originating at the root, then the entire subtree rooted at $v$ is activated with probability $1$, or equivalently, the inequality in \eqref{lbshj0tn} holds for every $n$ with probability $1$.  Combining this with \eqref{cnphrslb}, we then see that the number of returns to the root almost surely dominates a Poisson random variable of arbitrarily large mean, thus establishing recurrence and completing the proof. 
	\end{proof}

	\section{Transience}
	\label{sec:transience}
	In this section we establish a transient regime for a wide class of frog models on non-amenable graphs.  This completes the second half of the proof of Theorem \ref{theorem:mrecth}, and in the process proves a lower bound on $\lambda_1(T)$.
	
	\noindent
	\subsection{Transience and the edge expansion constant} \label{sec:transience-isoperimetric} Central to our analysis in this section will be the notion of edge non-amenability.  We review some definitions first:
	
	\begin{defn}[Isoperimetric language from Chapter 6 of \cite{LP}] Let $G$ be a connected, locally finite graph.  
		\begin{itemize}
			\item For a finite subset $K \subset G$, define $|\partial K|$ to be the number of edges with precisely one vertex in $K$ and one in $K^c$.
			
			\item Define $|K|_D = \sum_{v \in K} \deg(v)$.
			
			\item Define the \emph{edge-expansion constant} to be $$\Phi_E(G) := \inf\left\{\frac{|\partial K| }{ |K|_D} : \emptyset \neq K \subset G \text{ is finite and connected}   \right\}\,.$$ 
			
			\item $G$ is \emph{edge amenable} if $\Phi_E(G) = 0$.  Otherwise, $G$ is \emph{edge non-amenable}.
		\end{itemize}
	\end{defn}
	
	\noindent
	Non-amenability will be enough to establish a transient regime of the frog model:
	
	\begin{theorem} \label{th:transience}
		Let $G$ be a connected, locally finite, and edge non-amenable graph. Fix a vertex $\rtt$ in $G$ and at each vertex $v\in G\setminus\{\rtt\}$, place $X_v \geq 0$ sleeping frogs where $\{X_v\}$ are jointly independent, and place a single awake frog at $\rtt$.  Then there exists $\lambda_0 > 0$ so that if $\E[X_v] \leq \lambda_0$ for all $v$, then only finitely many frogs visit the root almost surely.  Further, $\lambda_0 = \frac{\Phi_E(G)^2}{2 - \Phi_E(G)^2}$ is sufficient.
	\end{theorem}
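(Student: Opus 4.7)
The plan is to stochastically dominate the frog model by a branching random walk (BRW) on $G$ and then show that the BRW makes only finitely many visits to $\rtt$ in expectation whenever the offspring mean is small enough, using a discrete Cheeger / Dodziuk bound on the spectral radius of simple random walk.

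First I would construct the coupling. For each vertex $v$, fix an i.i.d.\ sequence $X_v^{(1)}, X_v^{(2)},\ldots$ each distributed as $X_v$, together with independent families of simple random walk trajectories starting from $v$, and realize both processes on the same source of randomness. In the frog model, the $X_v^{(1)}$ sleeping particles at $v$ are activated only upon the \emph{first} visit to $v$. In the BRW, every visit to $v$, indexed by its ordinal $k$, inserts $X_v^{(k)}$ fresh walkers at $v$, each performing an independent SRW from $v$. By construction, every active frog-model walker is also a BRW walker, so the number of visits to $\rtt$ in the BRW stochastically dominates that in the frog model.

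Next I would estimate $\E[Z_n(\rtt)]$, the expected number of BRW particles at $\rtt$ at time $n$, by a many-to-one computation. Expanding over genealogical paths $\pi=(\rtt=\pi_0,\pi_1,\ldots,\pi_n=\rtt)$ and using that the offspring variables along a single lineage are independent (because the ordinals $k$ differ), together with the uniform bound $\E[X_v]\le\lambda_0$ applied path-wise, gives
\[
\E[Z_n(\rtt)] \;=\; \sum_{\pi} \prod_{i=0}^{n-1} P(\pi_i,\pi_{i+1})\prod_{i=1}^{n}\bigl(1+\E[X_{\pi_i}]\bigr) \;\le\; (1+\lambda_0)^n\, p_n(\rtt,\rtt),
\]
where $p_n$ is the $n$-step SRW transition on $G$. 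Summing over $n\ge 1$ and using the reversibility bound $p_n(\rtt,\rtt)\le\rho^n$ with $\rho$ the norm of the transition operator on $\ell^2(\deg)$, the expected total number of visits to $\rtt$ in the BRW is bounded by a geometric series of ratio $(1+\lambda_0)\rho$.

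The remaining ingredient is the Dodziuk / discrete Cheeger inequality for infinite graphs, giving $\rho\le\sqrt{1-\Phi_E(G)^2}$. Taking $\lambda_0=\Phi_E(G)^2/(2-\Phi_E(G)^2)$ so that $1+\lambda_0=2/(2-\Phi_E^2)$, a direct algebraic check shows $(1+\lambda_0)\rho\le 2\sqrt{1-\Phi_E^2}/(2-\Phi_E^2)<1$ for every $\Phi_E>0$ (squaring reduces this to the trivial $0<\Phi_E^4$). Hence the geometric series converges, so the BRW visits $\rtt$ only finitely often in expectation and thus almost surely, and by the coupling the same holds for the frog model. The step I expect to require the most care is the many-to-one computation when the offspring distribution depends on the particle's current position — one must be careful that the $X_v^{(k)}$'s are genuinely independent across $(v,k)$ in the BRW so that the product of expectations is justified — while the spectral input and the final convergence computation are standard.
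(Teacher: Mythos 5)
Your proposal is correct, and it follows the paper's overall strategy --- dominate the frog model by a branching random walk and control returns via the spectral radius, which is bounded using edge non-amenability --- but it diverges at the key step. The paper treats the BRW as a branching Markov chain with offspring mean $1+\E[X_v]\le 1+\lambda_0$ and cites a general transience criterion (Theorem 3.2 of \cite{GM}), which guarantees finitely many visits whenever $1+\lambda_0\le 1/\rho(G)$, \emph{including equality}; consequently the paper only needs the weaker isoperimetric bound $\rho\le 1-\Phi_E(G)^2/2$ from Lemma \ref{lem:isoperimetric}, and indeed at $\lambda_0=\Phi_E^2/(2-\Phi_E^2)$ one has exactly $(1+\lambda_0)(1-\Phi_E^2/2)=1$. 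You instead run a self-contained first-moment (many-to-one) computation, $\E[Z_n(\rtt)]\le(1+\lambda_0)^n p_n(\rtt,\rtt)\le\bigl((1+\lambda_0)\rho\bigr)^n$, which requires \emph{strict} inequality $(1+\lambda_0)\rho<1$ for the geometric series to converge; you obtain this by invoking the sharper Dodziuk--Cheeger form $\rho\le\sqrt{1-\Phi_E^2}$, and your algebra verifying $2\sqrt{1-\Phi_E^2}/(2-\Phi_E^2)<1$ checks out. So your argument is more elementary (no black-box branching Markov chain theorem, only reversibility giving $p_n(\rtt,\rtt)\le\rho^n$ plus the Cheeger inequality) at the cost of needing the stronger spectral input; had you used only the bound actually stated in Lemma \ref{lem:isoperimetric}, your series would sit exactly at the boundary and the argument would fail at the stated value of $\lambda_0$. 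You were also right to flag the independence bookkeeping in the many-to-one step: indexing the offspring variables $X_v^{(k)}$ by visit ordinal and noting that the ordinal of a given arrival is determined before $X_v^{(k)}$ is revealed is exactly what justifies the product of expectations. Both routes yield the same constant $\lambda_0=\Phi_E(G)^2/(2-\Phi_E(G)^2)$.
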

	
	Our strategy for proving Theorem \ref{th:transience} will involve coupling the frog model with a branching random walk.  The isoperimetric property of edge non-amenable graphs comes into play due to the following relationship with simple random walk:  
	
	\begin{lemma}[Theorem 6.7 of \cite{LP}]\label{lem:isoperimetric}
		Let $G$ be a locally finite connected graph and fix some vertex $\rtt \in G$.  Let $\{Y_n\}$ denote a simple random walk on $G$ starting at $\rtt$, and set $\rho(G) = \limsup_{n \to \infty} \Prob[Y_{2n} = \rtt]^{1/2n}$.  Then $$ \Phi_E(G)^2 /2 \leq 1 - \rho(G) \leq \Phi_{E}(G)\,.$$
	\end{lemma}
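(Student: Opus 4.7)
My plan is to prove the pair of inequalities via the classical Cheeger-type argument for reversible random walks, with $1 - \rho$ playing the role of a spectral gap that sandwiches $\Phi_E(G)$. The simple random walk kernel $P$ is reversible with respect to the degree measure $\pi(v) = \deg(v)$ (since $\pi(x) P(x,y) = \mathbf{1}_{x \sim y}$), so $P$ is a bounded self-adjoint operator on $\ell^2(V,\pi)$. Introduce the Dirichlet form $\mathcal{E}(f,f) = \langle (I - P) f, f \rangle_\pi = \sum_{\{x,y\} \in E}(f(x) - f(y))^2$. The first step is to identify $\rho(G) = \limsup_n \Prob[Y_{2n} = \rtt]^{1/2n}$ with the operator norm $\|P\|$ on $\ell^2(V,\pi)$; by Cauchy--Schwarz and the spectral theorem this limsup is independent of $\rtt$ and equals $\|P\|$. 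Passing to $P^2$ if needed to suppress negative spectrum, one then obtains the variational identity $1 - \rho = \inf_f \mathcal{E}(f,f) / \|f\|_\pi^2$, where the infimum is over nonzero finitely supported $f$.

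For the upper bound $1 - \rho \leq \Phi_E(G)$, test the Rayleigh quotient against $f = \mathbf{1}_K$ for a finite connected set $K$. A direct computation gives $\mathcal{E}(f,f) = |\partial K|$ and $\|f\|_\pi^2 = |K|_D$, so the Rayleigh quotient equals $|\partial K|/|K|_D$. Taking the infimum over such $K$ concludes this half.

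For the lower bound $\Phi_E(G)^2 / 2 \leq 1 - \rho$, apply the co-area formula to $f^2$ for an arbitrary nonnegative finitely supported $f$:
\begin{align*}
\sum_x \deg(x)\, f(x)^2 &= \int_0^\infty |\{f^2 > t\}|_D \, dt, \\
\sum_{\{x,y\} \in E} |f(x)^2 - f(y)^2| &= \int_0^\infty |\partial \{f^2 > t\}| \, dt.
\end{align*}
Applying the definition of $\Phi_E(G)$ componentwise to each connected piece of the level set $\{f^2 > t\}$ and integrating yields $\sum_e |f(x)^2 - f(y)^2| \geq \Phi_E(G) \, \|f\|_\pi^2$. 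Factoring $f(x)^2 - f(y)^2 = (f(x)-f(y))(f(x)+f(y))$ and applying Cauchy--Schwarz together with the bound $\sum_e (f(x)+f(y))^2 \leq 2 \|f\|_\pi^2$ gives $\Phi_E(G) \, \|f\|_\pi^2 \leq \sqrt{2 \mathcal{E}(f,f)} \cdot \|f\|_\pi$. Rearranging produces $\mathcal{E}(f,f)/\|f\|_\pi^2 \geq \Phi_E(G)^2 / 2$, and infimizing over $f$ closes the argument via the variational characterization.

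I expect the main technical obstacle to be the identification of $\rho(G)$, defined via return probabilities at a single distinguished vertex, with the spectral quantity $\inf_f \mathcal{E}(f,f)/\|f\|_\pi^2$; this requires both the base-point independence of the limsup and care when $P$ has nonempty negative spectrum (as on bipartite graphs), where one routes through $\|P^2\| = \|P\|^2$. A secondary subtlety in the Cheeger step is that level sets $\{f^2 > t\}$ need not be connected, but since $\Phi_E(G)$ is defined as an infimum over finite \emph{connected} subsets, one applies the bound to each component separately and sums, recovering the full boundary and degree mass without loss.
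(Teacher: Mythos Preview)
The paper does not supply its own proof of this lemma; it is quoted verbatim as Theorem~6.7 of Lyons--Peres and used as a black box. Your outline is the standard Cheeger/Dirichlet-form argument and is essentially what that cited reference does, so there is no divergence to discuss.

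One small correction to the technical obstacle you flag at the end: routing through $\|P^2\| = \|P\|^2$ does not by itself yield the variational identity $1 - \rho = \inf_f \mathcal{E}(f,f)/\|f\|_\pi^2$; working with $P^2$ produces $1 - \rho^2$ together with the Dirichlet form of $P^2$, not that of $P$. What you actually need is that $\sup\mathrm{spec}(P) = \|P\| = \rho$, and this follows directly from the nonnegativity of the kernel: for any $f$ one has $|\langle Pf,f\rangle_\pi| \leq \langle P|f|,|f|\rangle_\pi$, so the numerical radius of the self-adjoint operator $P$ is already attained on nonnegative test functions and hence coincides with the top of the spectrum. With that identification in hand, both inequalities follow from your Rayleigh-quotient bounds exactly as written.
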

	
	\medskip
	The quantity $\rho(G)$ is known as the \emph{spectral radius} of $G$, and is in fact independent of the choice of $\rtt$.  An important consequence of Lemma \ref{lem:isoperimetric} is that edge non-amenability is equivalent to $\rho(G) < 1$.  From here, a proof of Theorem \ref{th:transience} follows from comparison to a branching random walk.  Heuristically, the average number of frogs at time $2n$ will be at most $(1 + \lambda)^{2n}$ and the probability of being at the origin will be roughly $\rho^{2n}$.  If $\lambda$ is small enough, then the average number of particles at the root at time $2n$ will decay exponentially, and thus be summable.  
	
	\bigskip
	\noindent
	\emph{Proof of Theorem \ref{th:transience}:}  We will compare the frog model to a branching random walk.  Begin with a single particle at $\rtt$, which will perform a simple random walk.  When a particle lands on a vertex $v$, independently sample a copy of $X_v$, and add $X_v$ many particles at $v$, which in turn perform simple random walks.  The number of particles that return to $\rtt$ in this model stochastically dominates the number of frogs that return to $\rtt$ in the frog model, and thus it is sufficient to show that only finitely many particles visit the root in our new branching model.  This is a branching Markov chain, where the mean at each vertex is $1 + \E[X_v] \leq 1 + \lambda_0$.  By Theorem 3.2 of \cite{GM}, there are only finitely many visits to the root, provided $1 + \lambda_0 \leq \frac{1}{\rho(G)}$.   Lower bounding $$\frac{1}{\rho(G)} = \frac{1}{1 - (1 - \rho(G))} \geq \frac{1}{1-\Phi(G)^2 /2}$$ by Lemma \ref{lem:isoperimetric} completes the proof. \qed
	
	\bigskip
	For our main application of Theorem \ref{th:transience}, we will apply it to the class of trees $\mathcal{T}^{**}_f$: defined as $\underset{L\geq 1}{\cup}\mathcal{T}^*_L$, where $\mathcal{T}^*_L$ represents the set of all trees satisfying the two conditions in Lemma \ref{lem:non-amenable} each with $M = L$, and with no degree restriction.  To do this, we start by showing that the edge-expansion constant can be uniformly bounded below for trees in $\mathcal{T}^*_L$.
	
	\begin{lemma} \label{lem:tree-iso}
		For each $T \in \mathcal{T}^*_L$, $\Phi_E(T) \geq \frac{1}{9L^2}$.
	\end{lemma}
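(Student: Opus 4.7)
The plan is to analyze an arbitrary finite connected $K \subset T$ by decomposing via its intersection with the backbone. Let $K_r := K \cap T_r$. Since $T_r$ is convex in $T$ --- the unique $T$-path between two vertices of $T_r$ stays inside $T_r$, because every intermediate vertex has an endpoint as a descendant and hence inherits an infinite subtree --- $K_r$ is either empty or a finite connected subtree of $T_r$. If $K_r = \emptyset$, then $K$ lies inside a single bush, which has at most $L - 1$ vertices by the first condition of Lemma~\ref{lem:non-amenable}. Combined with $|\partial K| \geq 1$ (since $K$ is a finite proper subset of the connected infinite $T$) and the subtree identity $|K|_D = 2|K| - 2 + |\partial K|$, this forces $|\partial K|/|K|_D \geq 1/(2L-3) \geq 1/(9L^2)$ for $L \geq 2$; for $L = 1$ the first condition precludes any bushes, so this case does not arise.

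When $K_r \neq \emptyset$, I reduce to backbone isoperimetry via two elementary observations: (i) $|\partial_T K| \geq |\partial_{T_r} K_r|$, since every $T_r$-edge between $K_r$ and $T_r \setminus K_r$ is a $T$-boundary edge of $K$; and (ii) $|K| \leq L|K_r|$, since the first condition bounds the total bush content attached to each $v \in K_r$ by $L - 1$. Partition the vertices of $T_r$ into $V_2$ (degree exactly $2$ in $T_r$) and $V_3$ (degree $\geq 3$ in $T_r$), and apply the handshake identity on the subtree $K_r$ to get $|\partial_{T_r} K_r| = \sum_{v \in K_r}(\deg_{T_r}(v) - 2) + 2$. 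Writing $\Sigma := \sum_{v \in K_r \cap V_3} \deg_{T_r}(v)$ and noting that only $V_3$-vertices contribute positively (a possible root of $T_r$-degree one contributes at most $-1$, absorbed by the $+2$), I obtain $|\partial_{T_r} K_r| \geq \Sigma - 2|K_r \cap V_3| \geq \Sigma/3$.

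The crux is to relate $\Sigma$ back to $|K_r|$, and here the second condition of Lemma~\ref{lem:non-amenable} enters: the connected components of $K_r \cap V_2$ (viewed as induced subgraphs of $K_r$) are paths of $V_2$-vertices, and each such path, sitting inside a maximal degree-$2$ path of $T_r$, has at most $L$ vertices. When $V_3 \cap K_r \neq \emptyset$, every such path-component is attached to $V_3 \cap K_r$ by at least one edge by connectedness of $K_r$, so the number of components is bounded by the number of $V_2$--$V_3$ edges in $K_r$, which is at most $\Sigma$. Thus $|K_r \cap V_2| \leq L\Sigma$; combined with $|K_r \cap V_3| \leq \Sigma/3$ this gives $|K_r| \leq (3L+1)\Sigma/3$, so $|\partial_{T_r} K_r| \geq \Sigma/3 \geq |K_r|/(3L+1)$ (the subcase $V_3 \cap K_r = \emptyset$, in which $K_r$ itself is a single $V_2$-path of length at most $L$, is immediate). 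Combining with (i), (ii), and the subtree identity for $K$ yields $|\partial_T K|/|K|_D^T \geq 1/(6L^2 + 2L + 1) \geq 1/(9L^2)$ for $L \geq 1$. The main obstacle is precisely the combinatorial bound on the number of $V_2$-components of $K_r$, which is where the second condition of Lemma~\ref{lem:non-amenable} is indispensable; everything else is bookkeeping via the handshake identity.
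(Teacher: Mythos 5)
Your proof is correct, but it is organized quite differently from the paper's. The paper proves the bound by induction on the number $k$ of vertices of $K$ that have degree at least $3$ in $T_r$: it handles $k=0$ and $k=1$ by hand and then, in the inductive step, adds one such branch vertex at a time, tracking the incremental change to $|\partial K|$ and $|K|_D$ and using the inequality $x-2\geq x/3$ for $x\geq 3$. You instead give a direct, global count: you reduce to the backbone via convexity of $T_r$ and the inequalities $|\partial_T K|\geq|\partial_{T_r}K_r|$ and $|K|\leq L|K_r|$, then compute $|\partial_{T_r}K_r|$ exactly with the handshake identity $\sum_{v\in K_r}(\deg_{T_r}(v)-2)+2$ and control $|K_r\cap V_2|$ by bounding the number of degree-$2$ path components (each of length at most $L$ by the second condition, each anchored to a $V_3$-vertex by connectedness). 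Both arguments use the two structural hypotheses in exactly the same roles --- bounded bushes to control $|K|_D$ in terms of backbone data, bounded pipes to control the degree-$2$ stretches --- so the combinatorial content is the same, but your version replaces the induction with a single closed-form computation, and it yields the slightly sharper constant $1/(6L^2+2L+1)$. The only gloss is the possibility that the root of $T_r$ has degree $1$ in $T_r$ (so it lies in neither $V_2$ nor $V_3$ and could serve as the anchor of one $V_2$-component); you note its effect on the handshake sum but not on the component count or on $|K_r|=|K_r\cap V_2|+|K_r\cap V_3|$. This costs only an additive $+1$ in each place, which is comfortably absorbed by the slack between your constant and $9L^2$, so the argument stands.
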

	
	\begin{proof}
		Let $K$ be a finite connected subset of $T$.  Suppose that $K$ contains elements $v_1,\ldots, v_k$ of $T_r$ so that each $v_j$ has degree at least $3$ in $T_r$.  We will prove the lemma by inducting on $k$, first starting with the case of $k = 0$.  For each $v \in T$, the connected component of $v$ in $(T \setminus T_r) \cup \{v\}$ is a tree with at most $L$ vertices, i.e. at most $L-1$ edges.  Therefore, the sum of degrees in this tree is at most $2(L-1)$.  In the case where $K \cap T_r = \emptyset$, we have $|K|_D \leq 2(L-1)$; otherwise, $$|K|_D \leq \sum_{v \in K \cap T_r} (2 + 2(L-1)) \leq 2 L^2\,. $$Noting $|\partial K| \geq 1$ shows the lemma in this case.  
		
		We will also need to address the $k = 1$ case; we may decompose $K$ into a single vertex $v \in T_r$ with degree at least $3$ in $T_r$ together with paths in $T_r$ coming off of $v$, with all of these vertices possibly decorated with finite trees.  Note that $|\partial K| \geq \deg_{T_r}(v)$.  For an upper bound on $|K|_D$ note first that the degree of $v$ plus the sums of the degrees of all the other vertices in the connected component of $v$ in $(T \setminus T_r ) \cup \{ v\}$ is at most $2L + \deg_{T_r}(v)$.  Together with the $\deg_{T_r}(v)$ many paths in $K \cap T_r$, we have $$|K|_D \leq 2L + \deg_{T_r}(v) + \deg_{T_r}(v)\left(2 L^2  \right) \leq (3 L^2) \deg_{T_r}(v)$$where the first inequality follows from the $k = 0$ case.  
		
		Next, assume that the lemma holds if $K$ contains at most $k \geq 1$ elements in $T_r$ with degree at least $3$ (in $T_r$); suppose $K$ now contains $k+1$ such elements.  Label them $v_1,\ldots,v_{k+1}$ in such a way that each $v_i$ is connected to $v_j$ in $K \setminus \{v_{k+1}\}$ for $i,j \leq k$.  Set $K_1$ to be the connected component in $K \setminus v_{k+1}$ that contains $v_1,\ldots,v_k$.   By the inductive hypothesis, $|\partial K_1|/ |K_1|_D \geq \frac{1}{9L^2}$.  When adding the remaining portion of $K$, we decrease the size of the boundary by $1$ since we've added $v_{k+1}$, but also add at least $\deg_{T_r}(v_{k+1}) -1$ elements to the boundary, so $$|\partial K| \geq |\partial K_{1}| + \deg_{T_r}(v_{k+1}) - 2\,.$$
		Similarly, the total addition to the degree can be bounded above via \begin{align*}|K|_D &\leq |K_1|_D + 2L + \deg_{T_r}(v_{k+1}) + \deg_{T_r}(v_{k+1})(2 L^2) \\&\leq |K_1|_D + (3 L^2) \deg_{T_r}(v_{k+1})  \,.\end{align*} Utilizing the inequality $x - 2 \geq x/3$ for $x \geq 3$ completes the inductive step, and thus the lemma.	
	\end{proof}
	
\medskip
A transient regime for the frog model on trees in $\mathcal{T}^*_L$ follows immediately from Theorem \ref{th:transience} and Lemma \ref{lem:tree-iso}.
	
	\medskip
	\begin{cor}\label{cor:jtlm}
		For each $T \in \mathcal{T}_L^*$, the frog model on $T$ is transient provided the distribution of frogs has mean at most $\frac{1}{162 L^4}$.
	\end{cor}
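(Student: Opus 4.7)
The plan is to prove the corollary as an immediate consequence of the two results just established: Theorem \ref{th:transience} gives a quantitative sufficient condition on the Poisson mean for transience in terms of $\Phi_E(T)$, and Lemma \ref{lem:tree-iso} supplies a uniform lower bound on $\Phi_E(T)$ across the class $\mathcal{T}^*_L$. So the entire argument reduces to checking that when these two bounds are composed, they yield the stated threshold $1/(162L^4)$.

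First I would note that the function $f(x) = x^2/(2-x^2)$ is increasing on $[0,\sqrt{2})$, so applying the lower bound $\Phi_E(T) \geq 1/(9L^2)$ from Lemma \ref{lem:tree-iso} inside the expression $\lambda_0 = \Phi_E(T)^2/(2 - \Phi_E(T)^2)$ from Theorem \ref{th:transience} produces
\[
\lambda_0 \;\geq\; \frac{1/(81L^4)}{2 - 1/(81L^4)}.
\]
Since $L \geq 1$ forces $1/(81L^4) \leq 1/81 < 2$, the denominator is at most $2$, which gives $\lambda_0 \geq 1/(162L^4)$. Plugging this into Theorem \ref{th:transience} shows that any offspring distribution with mean at most $1/(162L^4)$ lies inside the transient regime.

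There is no genuine obstacle here: the only small subtlety is confirming that the denominator $2 - \Phi_E(T)^2$ can safely be bounded above by $2$ (which is trivial for $L \geq 1$) and invoking monotonicity of $x \mapsto x^2/(2-x^2)$ so that the substitution of the lower bound on $\Phi_E(T)$ into the formula for $\lambda_0$ goes in the correct direction. Once those two points are noted, the corollary follows in a single line.
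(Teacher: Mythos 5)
Your proposal is correct and is exactly the argument the paper intends: the corollary is stated as an immediate consequence of Theorem \ref{th:transience} and Lemma \ref{lem:tree-iso}, and your computation $\frac{(1/(9L^2))^2}{2 - (1/(9L^2))^2} \geq \frac{1/(81L^4)}{2} = \frac{1}{162L^4}$ (justified by monotonicity of $x \mapsto x^2/(2-x^2)$) is the one-line verification the paper leaves implicit.
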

	
	\section{Examples and open questions}
	We close by providing a couple of examples to show that the assumptions in Theorem \ref{theorem:mrecth} cannot be removed, and by posing some open questions.
	
	\subsection{Examples}
		
	In Theorem \ref{theorem:mrecth}, the assumption of non-amenability cannot be removed: the graph $\mathbb{Z}$ is a tree of bounded degree and since simple random walk is recurrent on $\mathbb{Z}$, the frog model will be as well for any non-trivial i.i.d.~distribution of frogs.  On the other hand, if we drop the assumption that $T$ is of bounded degree, then there need not be a recurrent regime.  We prove this Lemma in \cite{mr}, but import the example here:
	
	\begin{lemma}[\cite{mr}] \label{lem:rtnrr}
		Let $T$ be the rooted tree where all vertices at depth $n$ have $n+2$ children.  Then the frog model with i.i.d.~$\Poiss(\lambda)$ frogs per vertex is transient on $T$ for all $\lambda$.
	\end{lemma}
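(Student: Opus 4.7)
My plan is to bound $\E[\text{visits to }\mathbf{0}]$ directly and show it is finite for every $\lambda>0$; this implies transience. Writing $G$ for the Green's function of SRW on $T$, $h_n$ for the probability that SRW from a depth-$n$ vertex ever reaches $\mathbf{0}$, and $a_n=\sum_{|v|=n}\Prob(v\text{ is activated})$, linearity of expectation and reversibility ($G(v,\mathbf{0})=h_{|v|}G(\mathbf{0},\mathbf{0})$) give
\begin{equation*}
\E[\text{visits to }\mathbf{0}] \;=\; G(\mathbf{0},\mathbf{0})\Bigl(1+\lambda\sum_{n\geq 1}a_n h_n\Bigr),
\end{equation*}
so the goal reduces to showing $\sum_n a_n h_n<\infty$.

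For the single-particle factor, let $r_n$ denote the effective resistance from a depth-$n$ vertex to infinity within its own subtree. The series-parallel recursion $r_n=(1+r_{n+1})/(n+2)$ unfolds to $r_0=\sum_{m\geq 2}1/m!=e-2<\infty$ and more generally $r_n=O(1/n)$, confirming transience of SRW on $T$. The probability that SRW from depth $n$ hits its parent is then $\pi_n=r_n/(1+r_n)\leq 1/(n+1)$, and iterating along the unique ancestral path yields $h_n=\prod_{k=1}^n\pi_k=O(1/(n+1)!)$, so $G(v,\mathbf{0})$ decays factorially in $|v|$.

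The heart of the proof is the bound on $a_n$. Since a vertex is activated only if it is visited, $a_n$ is at most the expected total number of visits to level $n$ summed over all frogs. Decomposing by the origin of each frog and applying the strong Markov property at the first hitting time of $\mathbf{0}$, one separates the contribution of ``excursions through the root'' from visits confined to the current subtree: for a frog starting at a vertex $v$ with $|v|=k\geq 1$ one obtains $\E_v[\text{visits to level }n]=O(1+nh_k)$, while for the root frog itself it is $O(n)$ (computed from $\sum_{|w|=n}G(\mathbf{0},w)$ via reversibility together with $h_n\sim 1/(n+1)!$). Substituting gives the recursive bound
\begin{equation*}
a_n \;\leq\; Cn + C\lambda\sum_{k=1}^{n-1}a_k\bigl(1+nh_k\bigr),
\end{equation*}
which, by a discrete Gronwall argument combined with a truncation to resolve the self-referential $\sum a_kh_k$ contribution, yields the merely exponential estimate $a_n=O\bigl(n(1+C\lambda)^n\bigr)$.

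Combining the two estimates, $a_nh_n=O\bigl(n(1+C\lambda)^n/(n+1)!\bigr)$, which is summable for every $\lambda>0$ because the factorial denominator dominates any exponential numerator. Thus $\E[\text{visits to }\mathbf{0}]<\infty$, so almost surely only finitely many frogs ever return to the root, establishing transience. The main obstacle is closing the activation recursion: the ``excursion through the root'' contribution to $\E_v[\text{visits to level }n]$ genuinely grows with $n$, reflecting that the root frog itself reaches $\Theta(n)$ distinct vertices at each depth $n$, so the resulting circular dependency between $a_n$ and $\sum_k a_kh_k$ must be unwound carefully before the factorial decay of $h_n$ — a consequence of the super-exponential growth of $T$ — can overwhelm any exponential branching and force transience for every $\lambda$.
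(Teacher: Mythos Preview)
The paper does not supply its own proof of this lemma; it is quoted from \cite{mr}, so there is nothing here to compare your approach against directly.

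Your overall plan --- show $\E[\text{visits to }\mathbf{0}]<\infty$ by reducing to $\sum_n a_nh_n<\infty$ --- is sound, and the single-particle estimates ($r_n$, $\pi_n$, $h_n\asymp 1/(n+2)!$, $G(\mathbf{0},\mathbf{0})<\infty$) are correct. The gap is in your control of $a_n$. You assert
\[
a_n \;\le\; Cn + C\lambda\sum_{k=1}^{n-1}a_k(1+nh_k),
\]
but frogs originating at depths $k\ge n$ also visit level $n$ and hence contribute to $a_n$; nothing in your sketch justifies truncating the sum at $n-1$. Writing the inequality honestly --- using that the projected depth process $L_t=|X_t|$ has $G_L(k,n)=G_L(n,n)$ for $k\le n$ and $G_L(k,n)=(h_k/h_n)G_L(n,n)$ for $k>n$, with $G_L(n,n)\to 1$ --- one obtains
\[
a_n \;\le\; G_L(n,n)\Bigl(1+\lambda\sum_{k\le n}a_k\Bigr)+\frac{\lambda\,G_L(n,n)}{h_n}\sum_{k>n}a_kh_k.
\]
No Gronwall or bootstrap closes this once $\lambda$ is large: even discarding the tail, an ansatz $a_n\le D\mu^n$ forces $G_L(n,n)\,\lambda\,\mu/(\mu-1)\le 1$, impossible for $\lambda\gtrsim 1$ since $G_L(n,n)\to 1$. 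The underlying reason is that bounding $\Prob(v\text{ activated})$ by $\E[\text{visits to }v]$ lets every activated frog deposit $\Theta(1)$ expected visits at \emph{every} level, collapsing your inequality to the tautology $a_n\le C\bigl(1+\lambda\sum_ka_k\bigr)$. Your reference to ``a truncation to resolve the self-referential $\sum a_kh_k$ contribution'' names the difficulty but supplies no mechanism, and the trivial estimate $a_n\le(n+1)!$ yields only $a_nh_n\asymp 1/n$, which just fails to be summable. Proving transience for \emph{all} $\lambda$ therefore requires an argument that genuinely exploits the growing degrees at depth $n$ to damp $q_n=\Prob(v\text{ activated})$, not merely the factorial decay of $h_n$; the expected-visits route as you have set it up cannot deliver this.
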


	Further, it need not be the case that $\lambda_1$ and $\lambda_2$ are equal in Theorem \ref{theorem:mrecth}.  Indeed, it may be the case that there is a non-trivial region between the two on which infinitely many frogs visit the root with probability strictly between $0$ and $1$.  Again, we import the example from \cite{mr}
	
	\begin{lemma}[\cite{mr}]\label{lem:nontrivial-intermediate}
		Let $T$ denote the rooted tree formed by joining each of the roots of the $2$-ary tree and $d$-ary tree to a single root by a pair of distinct edges.  For $d$ sufficiently large, $\lambda_1(T) < \lambda_2(T)$.
	\end{lemma}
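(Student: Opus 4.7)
The plan is to identify a value $\lambda^*$ at which the frog model on $T$ has the probability of infinitely many visits to the root lying strictly in $(0,1)$; this immediately yields $\lambda_1(T) \leq \lambda^* \leq \lambda_2(T)$ with both inequalities strict, hence $\lambda_1(T) < \lambda_2(T)$. The mechanism is that the two subtrees are designed to behave in opposite ways at the same $\lambda^*$: the binary side $T_1$ (rooted at $r_1$) drives recurrence with positive probability, while the $d$-ary side $T_2$ (rooted at $r_2$) allows complete transience with positive probability.

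For \emph{non-transience} at $\lambda^*$, I would fix $\lambda^*$ just above the sharp HJJ recurrence threshold $\lambda_c^{(2)}$ for the Poisson$(\lambda)$ frog model on the infinite $2$-ary tree from \cite{HJJ1}. Condition on the positive-probability event that the initial active particle first enters $T_1$. By a standard monotone coupling (deleting the $T_2$-side can only reduce activations within $T_1$), the activation process restricted to $T_1$ in the $T$-process dominates a standalone Poisson$(\lambda^*)$ frog model on the binary tree rooted at $r_1$. By \cite{HJJ1}, the standalone model is recurrent, so $r_1$ is visited infinitely often almost surely on this event; since each visit to $r_1$ has probability bounded below of its next step crossing to $r$, we obtain infinitely many visits to $r$ with positive probability.

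For \emph{non-recurrence} at the same $\lambda^*$, I would take $d$ sufficiently large. Using that the HJJ critical value $\lambda_c^{(d)}$ for the $d$-ary tree diverges as $d\to\infty$, choose $d$ so large that $\lambda^* < \lambda_c^{(d)}$, so that the standalone Poisson$(\lambda^*)$ frog model on the $d$-ary tree is transient. Condition on the positive-probability event that the initial active particle enters $T_2$. The activation within $T_2$ in the $T$-process is dominated by the standalone $d$-ary model (any return to $r$ diverts frogs away from $T_2$). The crux is to show that for $d$ large, with positive probability \emph{no} frog ever visits $r_2$ after the initial entry---on this event, no frog ever reaches $r$, so the root sees only finitely many visits and $T_1$ is never activated. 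This reduces to a first-moment estimate: hitting probabilities $p(v, r_2)$ from $v\in T_2$ decay uniformly like $1/d$ (a standard effective-resistance fact), while the expected size of the activated cluster in the standalone transient regime is bounded via HJJ's branching comparison. Combining these, the expected number of returns to $r_2$ is $O(1/d)$, so for $d$ sufficiently large Markov's inequality gives positive probability of the zero-returns event.

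The main obstacle is this last step: upgrading mere transience (finitely many returns almost surely) to the strictly stronger positive probability of \emph{zero} returns. Both quantitative inputs---the uniform $O(1/d)$ hitting bound and subcritical control of the activated-cluster size---are needed rather than just their qualitative counterparts, and some care is required to decouple the random activated cluster from the random walks used to compute hitting probabilities. The effective-resistance tools analogous to those in Section~\ref{sec:looperased} together with the HJJ branching comparison supply both ingredients, after which the first-moment bound closes the argument.
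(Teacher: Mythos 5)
Your overall strategy --- exhibiting a $\lambda^*$ at which the binary side produces infinitely many returns with positive probability while the $d$-ary side produces zero returns with positive probability --- is exactly the mechanism behind this example (the proof is imported from \cite{mr}, where it runs along these lines), and your non-recurrence half is essentially sound. That half is most cleanly executed by dominating the frogs in $T_2$ by a branching random walk with offspring mean $1+\lambda^*$ absorbed at the root, as in Section \ref{sec:transience}: since the spectral radius of the $d$-ary tree tends to $0$ as $d \to \infty$, the expected number of absorbed particles can be made less than $1$ for fixed $\lambda^*$, and Markov's inequality gives positive probability of zero returns. This also sidesteps the decoupling issue you flag, since the branching-random-walk expectation is computed directly rather than by conditioning on the activated cluster.

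The genuine gap is in the non-transience half: your monotone coupling points in the wrong direction. Deleting the $T_2$-side does not reduce activations within $T_1$ --- it increases them. In the full model on $T$, an active frog in $T_1$ that reaches $r_1$ steps to $r$ with probability $1/3$, and from $r$ it enters the highly transient $d$-ary side with probability $1/2$, where (for large $d$) it is lost forever with probability close to $1$; in the standalone binary-tree model there is no such leak. So the activation process in $T_1$ is \emph{dominated by}, not dominating, the standalone binary-tree frog model, and recurrence of the latter gives you nothing. The fix is to compare instead with a model in which frogs are stopped upon first hitting $r$ (the self-similar model underlying the recurrence proof of \cite{HJJ1}, or equivalently the truncated model of Section \ref{sec:truncated}): stopping at $r$ makes the behaviour of $T_2$ irrelevant to the count of visits to $r$ coming from $T_1$, and the stopped model on the binary side is recurrent for $\lambda^*$ above the appropriate threshold. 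A second, minor point: a single $\lambda^*$ with intermediate behaviour only yields $\lambda_1 \le \lambda^* \le \lambda_2$, not strict inequality; to conclude you should observe that your two estimates in fact hold simultaneously on an interval of $\lambda$ of positive length (the non-transience for all $\lambda$ above the binary threshold, the non-recurrence for all $\lambda \le \lambda^*$ by monotonicity), which your construction does supply.
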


	\subsection{Further questions}
	There are many natural questions that emerge from Theorem \ref{theorem:mrecth}; we highlight a couple that seem out of reach using the methods of this work:
	
	\begin{question}
		In the case of trees, can the assumption of non-amenability be weakened?  More concretely, suppose $T$ is a tree on which simple random walk a.s.~has speed bounded uniformly away from $0$.  Must there exist a transient regime for the frog model on all such trees?
	\end{question}

	The assumption that $T$ is a tree is central to our analysis here.  It is therefore natural to ask whether these results can be extended to other families of graphs.
	
	\begin{question}
		On what classes of graphs is non-amenability sufficient to establish a recurrent regime?  
	\end{question}

	\noindent

	\bigskip
	\appendix
	
	\section{Harmonic measure and return probability} \label{app:hitting} 
	
	\bigskip
	The following lemma closely resembles Lemma A.1 from our paper \cite{mr}.  While large segments of the two proofs are in fact identical, the results are nevertheless distinct, with the result from \cite{mr} applying to trees of unbounded degree, and this result applying to trees of bounded degree with weighted edges.  In the statement of the lemma, $\mathcal{T}_w$ represents the set of weighted trees defined in Section \ref{sec:truncated}, and for any weighted tree $T$ and vertex $u\in T$, $p_0(u)$ is defined as the probability that loop erased random walk, starting at $u$, ever reaches the root of $T$.
	
	\medskip
	\begin{lemma}\label{lem:crphmjg}
		If a tree $T\in\mathcal{T}_w$ has minimum degree $\delta\geq 3$, maximum degree $\Delta<\infty$, and edge resistances between $1$ and  $r<\infty$, then there exists a value $C>0$ (depending only on $\Delta$ and $r$) such that for every $v$ on level $2$ of $T$ and every $u\in T(v)$, we have $$p_0(u)\geq C\cdot{\sf HARM}_{T(v)}(u).$$

	\end{lemma}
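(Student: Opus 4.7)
The plan is to factor $p_0(u) = \Prob^T_u(\tau_{\mathbf{0}}<\infty)$ through the intermediate vertex $v$ and then compare the resulting $p(u,v)$ with ${\sf HARM}_{T(v)}(u)$ via reversibility of SRW, using the elementary fact that the loop-erased trace is contained in the underlying SRW trace.

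Since $v$ lies on the unique path from $u$ to $\mathbf{0}$, the strong Markov property at the hitting time of $v$ gives $p_0(u) = p(u,v)\cdot p(v,\mathbf{0})$, where $p(a,b) = \Prob^T_a(\tau_b < \infty)$. Since $v$ is at level $2$, $p(v,\mathbf{0}) = p(v,\Par{v})\,p(\Par{v},\mathbf{0})$, and each parent-hitting probability is at least the one-step probability $r^{-1}/c^T(w) \geq 1/(\Delta r)$. On the other hand, because LERW visits $u$ implies SRW visits $u$, one has ${\sf HARM}_{T(v)}(u) \leq \Prob^{T(v)}_v(\tau_u<\infty)$. It therefore suffices to prove $p(u,v) \geq C'\,\Prob^{T(v)}_v(\tau_u<\infty)$ for a constant $C'$ depending only on $\Delta, \delta, r$.

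This comparison is carried out in two stages. First, the reversibility identity $c(x)G(x,y) = c(y)G(y,x)$ combined with $G(w,w) = c(w)\mathscr{R}^T_{\rm eff}(w\leftrightarrow\infty)$ yields
\begin{equation*}
\Prob^T_u(\tau_v<\infty) = \Prob^T_v(\tau_u<\infty)\cdot\frac{\mathscr{R}^T_{\rm eff}(u\leftrightarrow\infty)}{\mathscr{R}^T_{\rm eff}(v\leftrightarrow\infty)},
\end{equation*}
and both effective resistances lie in a bounded range depending only on $\Delta,\delta,r$ by exactly the electrical network bounds used in the proof of \eqref{lbpfsn}. Second, the walks on $T$ and on $T(v)$ agree at every vertex except $v$, where the $T$-walk has the extra neighbor $\Par{v}$. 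A first-step decomposition at $v$, iterated via the strong Markov property at successive returns to $v$, gives
\begin{equation*}
\Prob^T_v(\tau_u<\infty) = \frac{\alpha\beta_u}{\alpha(\beta_u+\beta_\infty)+\gamma},\qquad \Prob^{T(v)}_v(\tau_u<\infty) = \frac{\beta_u}{\beta_u+\beta_\infty},
\end{equation*}
where $\alpha = c^{T(v)}(v)/c^T(v) \geq (\delta-1)/(\Delta r)$, where $\beta_u$ and $\beta_\infty$ are the probabilities that the $T(v)$-walk started at $v$ leaves $v$ and then respectively hits $u$ or escapes to infinity before returning to $v$, and where $\gamma \leq 1$ is the probability that the $T$-walk first steps to $\Par{v}$ and never returns to $v$. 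The escape probability $\beta_u + \beta_\infty = [c^{T(v)}(v)\,\mathscr{R}^{T(v)}_{\rm eff}(v\leftrightarrow\infty)]^{-1}$ is bounded below by the same resistance bounds, so the ratio $\alpha(\beta_u+\beta_\infty)/[\alpha(\beta_u+\beta_\infty)+\gamma]$ of the two expressions is bounded below by a positive constant.

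The main delicate point will be the bookkeeping in this last stage: one has to verify that after the walk takes its first step away from $v$, the $T$- and $T(v)$-walks can be coupled identically until the walk next reaches $v$, which is exactly what makes the same quantities $\beta_u$ and $\beta_\infty$ appear in both formulas. Multiplying the three resulting constant factors — the bound $1/(\Delta r)^2$ from $p(v,\mathbf{0})$, the bounded resistance ratio, and the first-step ratio — produces a constant $C>0$ depending only on $\Delta$, $\delta$, and $r$.
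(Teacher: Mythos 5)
There is one genuine gap, and it occurs in your very first line: you identify $p_0(u)$ with $\Prob^T_u(\tau_{\mathbf{0}}<\infty)$, the probability that \emph{simple} random walk from $u$ ever visits the root. But $p_0(u)$ is defined as the probability that the \emph{loop-erased} walk from $u$ reaches the root, i.e.\ that $\mathbf{0}$ lies on the loop erasure of the transient SRW trajectory, which on a tree happens exactly when the walk escapes to infinity through the complement of $T(\Par{v})$. This is strictly smaller than $\Prob^T_u(\tau_{\mathbf{0}}<\infty)$ (the walk may visit $\mathbf{0}$ and then escape back through $T(v)$), so a lower bound on the latter does not yield the lower bound on $p_0(u)$ that the lemma asserts. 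Concretely, the correct factorization is $p_0(u)=p(u,v)\cdot p(v,-\infty)$, where $p(v,-\infty)$ is the probability that the walk from $v$ escapes through a child of $\mathbf{0}$ other than $\Par{v}$; your factor $p(v,\mathbf{0})=p(v,\Par{v})\,p(\Par{v},\mathbf{0})$ overestimates this. The repair is short: $p(v,-\infty)$ equals the probability that the loop-erased walk from $v$ steps to $\Par{v}$ and then to $\mathbf{0}$, which by \eqref{lbpfsn} and \eqref{bfgtransprle} is at least $\frac{1}{4\Delta^2 r^3}$ --- but as written your chain of inequalities bounds the wrong quantity.

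The remainder of your argument --- reducing to $p(u,v)\geq C'\,\Prob^{T(v)}_v(\tau_u<\infty)$ via ${\sf HARM}_{T(v)}(u)\leq \Prob^{T(v)}_v(\tau_u<\infty)$, and proving that comparison by reversibility --- is sound and is essentially a repackaging of the paper's proof, which uses the symmetry $\pi(u)\Prob_u(\tau_v<\tau_u^+)=\pi(v)\Prob_v(\tau_u<\tau_v^+)$ together with an escape-probability bound where you use the Green-function identity $c(x)G(x,y)=c(y)G(y,x)$ in one stroke. Two small remarks: your second stage comparing the $T$-walk to the $T(v)$-walk is unnecessary, since a walk started at $u\in T(v)$ cannot leave $T(v)$ without first hitting $v$, so $\Prob^T_u(\tau_v<\infty)=\Prob^{T(v)}_u(\tau_v<\infty)$ and you may apply reversibility directly on $T(v)$; and your events defining $\beta_u$ and $\beta_\infty$ are not disjoint as stated (a single excursion can hit $u$ and then escape), so they should be defined as ``hits $u$ before returning to $v$'' and ``never returns to $v$ and never hits $u$'' for the renewal formula $\Prob^{T(v)}_v(\tau_u<\infty)=\beta_u/(\beta_u+\beta_\infty)$ to be exact. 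Neither of these is fatal; the misidentification of $p_0(u)$ is the only substantive error.
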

	
	\begin{proof}
		We begin by defining the following quantities: First, let $\tilde{p}(v,u)$ represent the probability that random walk on $T(v)$ beginning at $v$ (and performed in accordance with the assigned edge weights) ever hits $u$.  In addition, we define $\tilde{p}(u,\infty)$ to be the probability that random walk on $T(v)$ beginning at $u$ eventually escapes through one of the children of $u$.  Turning to random walk on $T$, we let $p(u,v)$ be defined as at the beginning of Section \ref{sec:looperased}, and we define $p(v,-\infty)$ to be the probability that random walk beginning at $v$ eventually escapes through one of the children of the root other than the parent of $v$.  Now noting that ${\sf HARM}_{T(v)}(u)=\tilde{p}(v,u)\cdot\tilde{p}(u,\infty)$ and $p_0(u)=p(u,v)\cdot p(v,-\infty)$, we see that we can prove the lemma by showing that $\frac{p(u,v)}{\tilde{p}(v,u)}\cdot p(v,-\infty)$ is bounded away from $0$.
		
		Looking first at $\frac{p(u,v)}{\tilde{p}(v,u)}$, we define $p^*(u,v)$ and $\tilde{p}^*(v,u)$ to be the probabilities that random walk on $T(v)$ beginning at $u$ ($v$ respectively) reaches $v$ ($u$ respectively) without first returning to its starting position.  Noting that
		\begin{equation}\label{inbnd1t1}\frac{p(u,v)}{\tilde{p}(v,u)}\geq\frac{p^*(u,v)}{\tilde{p}^*(v,u)}\cdot\frac{\tilde{p}^*(v,u)}{\tilde{p}(v,u)},
		\end{equation}
		and observing that
		\begin{equation}\label{inbnd1t2}
		\frac{p^*(u,v)}{\tilde{p}^*(v,u)}=\frac{\sum c(\tilde{e}_i)}{\sum c(e_j)}\end{equation} 
		(where the $\tilde{e}_i$'s represent the edges that touch $v$ in $T(v)$ and the $e_j$'s represent the edges that touch $u$ in $T$), we see that since both $T(v)$ and $T$ have bounded degrees and edge weights between $\frac{1}{r}$ and $1$, it follows that, in order to show that $\frac{p(u,v)}{\tilde{p}(v,u)}$ is bounded away from $0$, it will suffice to prove this for $\frac{\tilde{p}^*(v,u)}{\tilde{p}(v,u)}$.  Now we let $p$ represent the probability that random walk on $T(v)$ beginning at $v$ ever returns to $v$, and let $p'$ represent the probability that random walk on $T(v)$ beginning at $v$ returns to $v$ without first hitting $u$.  Observing that $\tilde{p}(v,u)=\frac{\tilde{p}^*(v,u)}{1-p'}\leq\frac{\tilde{p}^*(v,u)}{1-p}$, we see that 
		\begin{equation}\label{inbnd1t3}
		\frac{\tilde{p}^*(v,u)}{\tilde{p}(v,u)}\geq 1-p=\frac{\mathscr{R}^{-1}\Big(v\underset{T(v)}{\longleftrightarrow}\infty\Big)}{\sum c(\tilde{e}_i)}.
		\end{equation}
		Since $T(v)$ has bounded degree, all vertices have at least two children, and all edge weights are less than or equal to $1$, this then implies that both $\mathscr{R}\Big(v\underset{T(v)}{\longleftrightarrow}\infty\Big)$ and $\sum c(\tilde{e}_i)$ are bounded above, thus implying that $\frac{\tilde{p}^*(v,u)}{\tilde{p}(v,u)}$ (and therefore $\frac{p(u,v)}{\tilde{p}(v,u)}$) is bounded away from $0$.  Finally, to show that $p(v,-\infty)$ is bounded away from $0$, and thus complete the proof of the lemma, we observe that 
		$$p(v,-\infty)=\Prob_v(X_1=\Par{v})\cdot\Prob(X_2={\bf 0}|X_1=\Par{v}, X_0=v)\geq\frac{1}{4\Delta^2r^3}$$
		(where the second inequality follows from combining the lower bounds in \eqref{lbpfsn} and \eqref{bfgtransprle}).

	\end{proof}

	\bibliographystyle{abbrv}
	\bibliography{frogGW}

\end{document}